\documentclass[reqno,12pt]{amsart}
\usepackage{amsfonts}
\usepackage{a4wide}	
\usepackage{bm}

\numberwithin{equation}{section}

\usepackage{amsmath,amssymb,amsthm,amsfonts}
\usepackage{mathrsfs}
\usepackage{bbm}
\usepackage{hyperref}
\hypersetup{hidelinks}
\usepackage{xcolor}
\usepackage{extarrows}

\allowdisplaybreaks[4]%

\newtheorem{lemma}{Lemma}[section]
\newtheorem{theorem}{Theorem}[section]

\newtheorem{proposition}[lemma]{Proposition}
\newtheorem{remark}{Remark}[section]
\newtheorem{corollary}[lemma]{Corollary}

\newcommand{\F}{{}_2F_1}
\date{}

\everymath{\displaystyle}
\begin{document}
	\title[Sharp power-mean comparisons for Gauss hypergeometric functions]{Sharp power-mean comparisons for Gauss hypergeometric functions}

\author{Qintao Deng}
\address{[Qintao Deng] Central China Normal University, School of Mathematics and Statistics \& Hubei Key Laboratory of Mathematical Sciences, 152 Luoyu Road, Wuhan 430079, P.R. China}
\email{qintaodeng@ccnu.edu.cn}

\author{Haoran Peng}
\address{[Haoran Peng] Central China Normal University, School of Mathematics and Statistics \& Hubei Key Laboratory of Mathematical Sciences, 152 Luoyu Road, Wuhan 430079, P.R. China}
\email{mathematicphr@mails.ccnu.edu.cn}

\author{Jiatong Zou}
\address{[Jiatong Zou] Central China Normal University, School of Mathematics and Statistics \& Hubei Key Laboratory of Mathematical Sciences, 152 Luoyu Road, Wuhan 430079, P.R. China}
\email{jiatongzou000818@mails.ccnu.edu.cn}

\date{\today}

\subjclass[2020]{
33C05, 
26D07, 
26E60 
}
\keywords{Gauss hypergeometric function, weighted power mean, sharp inequality, differential comparison}

	\maketitle

	\begin{abstract}
We determine the sharp weighted power-mean comparisons for the family
$\{H_a(r)
\}_{r\in (0,1)}$
with the logarithmic interpretation at $a=0$.
In the parameter ranges considered by Barnard, Richards and Tiedeman, we give a complete characterization of all orders $\lambda,\mu\in\mathbb{R}$ for which
$$A_\lambda(w;1,1-r)\leq H_a(r)\leq A_\mu(w;1,1-r)$$
holds for every $r\in(0,1)$. In particular, our results settle completely their two power-mean conjectures. The two sharp orders are determined by the second-order expansion at $r=0$ and the endpoint matching as $r\to1$. A weighted Wronskian identity and a sign analysis of its residual establish the global inequalities.
\end{abstract}

	
\tableofcontents	
  \section{Introduction}
  \subsection{The problem}
 The weighted power mean are defined by 
\begin{equation} \label{mean}
A_p(w;1,h)=
\begin{cases}
[w+(1-w)h^p]^{\frac{1}{p}},&p\neq 0;\\
h^{1-w},&p=0,
\end{cases}
\end{equation}
where $0<w<1$, $h>0$ and $p\in\mathbb{R}$. 
Let $c>b>0$. For $a\neq 0$, define
\begin{equation} \label{H}
H_a(r)=\F(-a,b;c;r)^{\frac{1}{a}},\quad 0<r<1.
\end{equation}
Euler's integral representation shows that the hypergeometric function ${}_2F_1(-a,b;c;r)$ in \eqref{H} is positive. Hence the real power in \eqref{H} is well defined even when $a<0$. At $a=0$, we use the logarithmic definition
\begin{equation} \label{H0}
H_0(r)=\exp\left(\frac{1}{B(b,c-b)}\int_0^1 t^{b-1}(1-t)^{c-b-1}\log(1-rt)\mathrm{d}t\right).
\end{equation}
For each fixed $r\in(0,1)$, this definition satisfies $H_0(r)=\lim_{a\to0}H_a(r)$. Throughout the paper, when $a=0$, the notation $H_a$ means $H_0$.

To define the endpoint-matched order, assume in addition that $c+a-b>0$, put
\begin{equation}\label{orders-1}
w=1-\frac{b}{c},\quad p_0=\frac{a+c}{1+c},
C_a=\frac{\Gamma(c+a-b)\Gamma(c)}{\Gamma(c-b)\Gamma(c+a)}.
\end{equation}
When $a\neq 0$, denote that
\begin{equation} \label{orders}
 p_E=\frac{a\log w}{\log C_a},
\end{equation}
and when $a=0$, define
\begin{equation} \label{pE0}
 p_E=\frac{\log w}{L_0},
\end{equation}
where $$L_0=\frac{1}{B(b,c-b)}\int_0^1 t^{b-1}(1-t)^{c-b-1}\log(1-t)\mathrm{d}t<0.$$
The additional condition $c+a-b>0$ is automatic for $a\geq0$ and 
ensures integrability of Euler's beta integral at $r=1$ when $a<0$.

Our goal is to determine the sharp weighted power-mean bounds for $H_a$ in the branches $c\geq\max(1-2a,2b)$ and $c\leq\min(1-2a,2b)$ when $a\leq1$, and in the symmetric regime $a>1$, $c=2b$, which are both stated in \cite[Section 3]{BarnardRichardsTiedeman2010}. More precisely:

\noindent\textbf{Conjecture I.}
Let $a\leq1$, $c>b>0$ and $c>b-a$. If
$c\geq \max(1-2a,2b),$
then
\begin{equation*}
A_{p_0}(w;1,1-r)\leq H_a(r)\leq A_{p_E}(w;1,1-r),\quad 0<r<1;
\end{equation*}
if
$c\le\min(1-2a,2b),$
then
\begin{equation*}
A_{p_E}(w;1,1-r)\leq H_a(r)\leq A_{p_0}(w;1,1-r),\quad 0<r<1.
\end{equation*}

\noindent\textbf{Conjecture II.}
Let $a>1$, $b>0$ and $c=2b$. Then
\begin{equation*}
A_{p_E}\left(\frac{1}{2};1,1-r\right)\leq \F(-a,b;2b;r)^{\frac{1}{a}},\quad 0<r<1.
\end{equation*}

\subsection{Main results}
Our first result gives the complete sharp classification in the two parameter branches with $a\leq1$, including the logarithmic case $a=0$. 

\begin{theorem} \label{thm1}
Let $a\leq 1$, $c>b>0$ and $c>b-a$. Let $p_0$ and $p_E$ be given by\eqref{orders-1},\eqref{orders},\eqref{pE0}.
\begin{enumerate}
\item If $c\geq \max(1-2a,2b),$
then $0<p_0\leq p_E\leq1$, and for arbitrary $\lambda,\mu\in\mathbb{R}$,
\begin{equation} \label{upclass}
A_\lambda(w;1,1-r)\leq H_a(r)\leq A_\mu(w;1,1-r),\quad \forall r\in(0,1)
\end{equation}
holds if and only if 
$$\lambda\leq p_0,\quad \mu\geq p_E.$$
If $a=1$, then $p_0=p_E=1$ and $H_1(r)=A_1(w;1,1-r)$
for every $r\in(0,1)$. And \
if $c=2b=1-2a$, then $p_0=p_E=\frac{1}{2}$ and
\begin{equation} \label{identity}
H_a(r)=A_{\frac{1}{2}}(w;1,1-r)\quad\forall r\in(0,1).
\end{equation}
Outside these two cases, one has $p_0<p_E<1$ and
\begin{equation} \label{upsharp}
A_{p_0}(w;1,1-r)<H_a(r)<A_{p_E}(w;1,1-r),\quad \forall r\in (0,1).
\end{equation}
\item If
$c\le\min(1-2a,2b),$
then $0<p_E\leq p_0<1$, and \eqref{upclass} holds if and only if
\begin{equation*}
\lambda\leq p_E,\quad \mu\geq p_0.
\end{equation*}
The equality case is again exactly $c=2b=1-2a$. Outside this case,
\begin{equation} \label{lowsharp}
A_{p_E}(w;1,1-r)<H_a(r)<A_{p_0}(w;1,1-r),\quad \forall r\in (0,1).
\end{equation} 
\end{enumerate}
\end{theorem}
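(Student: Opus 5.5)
The plan is to reduce everything to one-parameter comparisons in the order $p$, using that $p\mapsto A_p(w;1,1-r)$ is strictly increasing for fixed $r\in(0,1)$ (since $1-r\neq1$). I would first dispose of the degenerate cases. For $a=1$, ${}_2F_1(-1,b;c;r)=1-\frac{b}{c}r=w+(1-w)(1-r)$, so $H_1=A_1$. For $c=2b=1-2a$ one has $w=\tfrac12$ and $a=\frac{1-c}{2}$, and $F={}_2F_1(-a,b;2b;r)$ should reduce by a quadratic transformation to $\bigl(\tfrac{1+\sqrt{1-r}}{2}\bigr)^{2a}$. This gives \eqref{identity}, and $p_0=\frac{a+c}{1+c}=\tfrac12$ directly. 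Evaluating at $r=1$ via Gauss's formula, $C_a^{1/a}=w^{1/p_E}$, gives $p_E=\tfrac12$ as well.

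\textbf{Necessity and location of the orders.} Expanding at $r=0$, we have $H_a(r)=1-\frac{b}{c}r+\alpha r^2+O(r^3)$, with $\alpha$ computed from the first two Taylor coefficients of $F$. The mean satisfies $A_p(w;1,1-r)=1-(1-w)r+\beta(p)r^2+O(r^3)$, where $\beta$ is affine and strictly increasing in $p$. The first-order terms agree automatically since $1-w=b/c$, and $\alpha=\beta(p_0)$ defines $p_0=\frac{a+c}{1+c}$. So a lower bound forces $\lambda\le p_0$ and an upper bound forces $\mu\ge p_0$. At $r\to1$, $H_a(1^-)=C_a^{1/a}$ (or $e^{L_0}$ when $a=0$) and $A_p(w;1,0^+)=w^{1/p}$ for $p>0$ (and $0$ for $p\le0$). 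Comparing these forces the other order to lie on the correct side of $p_E$. Hence in case (1) the admissible set is $\lambda\le\min(p_0,p_E)$ and $\mu\ge\max(p_0,p_E)$, and in case (2) the analogous statement holds. By monotonicity in $p$, the theorem then reduces to three tasks: the ordering $p_0\le p_E$ (respectively $\ge$), the bounds $0<p_E\le1$, and the two strict inequalities with the extreme orders.

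The ordering of $p_0$ and $p_E$ I would not prove separately. Once the global inequality $A_{p_0}<H_a$ on $(0,1)$ is established, letting $r\to1$ gives $w^{1/p_0}\le H_a(1^-)=w^{1/p_E}$, i.e.\ $p_0\le p_E$. The bound $p_E\le1$ follows in the same way from $H_a\le A_1$, which itself comes from the concavity or Jensen behaviour of $t\mapsto(1-rt)^a$ in Euler's integral, with the case $a\le1$ giving the right direction.

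\textbf{Global inequalities via a differential comparison.} Put $x=1-r$ and, for $p>0$, $\Phi_p(r)=H_a(r)^p-w-(1-w)(1-r)^p$. Then $H_a\gtrless A_p$ is equivalent to $\Phi_p\gtrless0$. Writing $H_a^p=F^{p/a}$ and using the hypergeometric equation $r(1-r)F''+[c-(b-a+1)r]F'+abF=0$, I would compute a weighted Wronskian of $F^{p/a}$ against $(1-r)^p$. Concretely, I would take $W=\rho(r)\bigl(\Phi_p'\,\cdot(1-r)^{p}-\Phi_p\,((1-r)^p)'\bigr)$ with the integrating factor $\rho=r^c(1-r)^{b-a-c+1-p}$, chosen so that $W'=\rho\cdot R_p(r)$. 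Here the residual $R_p$ collects the nonlinear term $(1-\frac ap)(F')^2/F$ together with the lower-order terms. For $p=p_0$, the quantity $\Phi$ vanishes to third order at $0$, and I expect the sign of $R_{p_0}$ to be governed by a factor like $(c-2b)(c-1+2a)$ times a manifestly signed expression. This factor is $\ge0$ in case (1) and $\le0$ in case (2), which is why the two branches are exactly these. The sign of $W$ then propagates to $\Phi_{p_0}/(1-r)^p$ being monotone, and together with $\Phi_{p_0}(0)=0$ this yields strict inequality off the identity case.

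For $p=p_E$ the anchor is instead at $r=1$, where $\Phi_{p_E}(1^-)=0$. The residual analysis should show that $\Phi_{p_E}$ has at most one sign change on $(0,1)$. Near $0$ we have $\Phi_{p_E}\sim(\beta(p_E)-\alpha)r^2$, which has the correct sign by $p_0<p_E$. Combined with the vanishing at $r=1$, a single sign change would contradict the boundary behaviour, so $\Phi_{p_E}$ keeps one sign.

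The main obstacle is the sign analysis of the residual $R_p$. The term $(F')^2/F$ is not of one sign relative to the others, so I would first try to bound $F'/F$ from one side using the already-established $p_0$-comparison, since it controls the logarithmic derivative. Failing that, I would bound it through a Turán/log-convexity property of $F$ in its Euler integral form. The logarithmic case $a=0$ should follow by the same computation with $F^{p/a}$ replaced by $\exp(p\,\log H_0)$, or by passing to the limit $a\to0$ in the strict inequalities and then restoring strictness via the monotone Wronskian.
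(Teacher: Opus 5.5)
Several parts of your proposal are fine: the necessity argument, the reduction to the two extreme orders via monotonicity of $p\mapsto A_p$, the bound $p_E\le1$ from $H_a\le A_1$, and the identity case via $\F(-a,\tfrac12-a;1-2a;r)=\bigl(\tfrac{1+\sqrt{1-r}}{2}\bigr)^{2a}$. The last is a legitimate alternative to the paper's observation that $S_{1/2}\equiv0$ there.

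The genuine gap is the core step. You never prove the strict global inequalities at $p_0$ and $p_E$, and the mechanism you set up cannot deliver them as it stands. You compare $F^{p/a}$, which in general satisfies no linear equation, against $(1-r)^p$. This places the nonlinearity on the unknown function, so your residual contains $(1-\frac ap)(F')^2/F$ and its sign cannot be read off. Your remedies are circular at $p=p_0$, since you would use the $p_0$-comparison to prove it, and otherwise only sketched.

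The missing idea is the reverse: keep $F$ in its linear equation and raise the explicit comparison function to the power $a$. With $r=1-\mathrm{e}^{-2u}$, the paper takes $f=\mathrm{e}^{au}H_a^a$, which solves $\mathcal{L}f=0$, and $g=\mathrm{e}^{au}A_p^a=[\cosh(pu+v_0)/\cosh v_0]^{a/p}$. For this $g$, $\mathcal{L}g=\frac{ag}{\cosh^2(pu+v_0)}S_p(u)$ with $S_p$ an explicit elementary function. Then $W=m(f'g-fg')$ with $m=(\sinh u)^c\mathrm{e}^{du}$ satisfies $W'=-\frac{amfg}{\cosh^2(pu+v_0)}S_p$. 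So the monotonicity of $f/g=(H_a/A_p)^a$ is governed by the sign of $S_p$ alone.

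Your guess about the structure of the residual is also off. The factor $(c-2b)(c-1+2a)$ is $\ge0$ in both branches, so it cannot distinguish them. In fact $S_{p_0}=\frac c2[F_{k_0}-k_0]-\frac d2Q_{k_0}$ with $k_0=2p_0-1=\frac{c-1+2a}{1+c}$. This is a sum of two terms whose signs are fixed separately by $\operatorname{sgn}k_0$ and $\operatorname{sgn}d$ through Lemma~\ref{hyp}. The two branches are exactly the regions where the two contributions have the same sign.

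The $p_E$ step has a further logical hole. Suppose $\Phi_{p_E}$ has the right sign near $0$, tends to $0$ as $r\to1$, and changes sign at most once. This does not exclude exactly one sign change, e.g.\ negative, then positive, then decaying to $0$. You would need the sign of $\Phi_{p_E}$ near $r=1$, which depends on delicate asymptotics you do not supply.

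What is needed is unimodality of the ratio, not a sign count for the difference. In the paper, $S_{p_E}$ is strictly monotone, by Lemma~\ref{hyp} with $k_E=2p_E-1$ in $(0,1)$ or $(-1,0)$ together with the sign of $d$. Hence $aS_{p_E}$ changes sign exactly once; it cannot keep one sign, since then $f/g$ would be monotone with both endpoint values equal to $1$. So $W$ rises then falls, $f/g$ is unimodal, and $f/g\to1$ at both ends forces strict inequality.

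Finally, letting $r\to1$ in $A_{p_0}<H_a$ gives only $p_0\le p_E$. You need $p_0<p_E$ both for the theorem and for the sign of $\Phi_{p_E}$ near $0$. Strictness comes from strict monotonicity of the quotient $H_a/A_{p_0}$. Its limit $w^{1/p_E-1/p_0}$ is then at least its value at any interior point, which is $>1$. Your monotone quantity $\Phi_{p_0}/(1-r)^{p_0}$ only bounds $\Phi_{p_0}$ below by a multiple of $(1-r)^{p_0}$, which tends to $0$, so it does not give this.
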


Our second result gives the complete sharp classification in the range $a>1$, $c=2b$.

\begin{theorem} \label{thm2}
Let $a>1$, $b>0$ and $c=2b$. 
Then
\begin{equation} \label{symrange}
1<p_E<p_0<a.
\end{equation}
Moreover,
for arbitrary $\lambda,\mu\in\mathbb{R}$,
\begin{equation} \label{symclass}
A_\lambda\left(\frac{1}{2};1,1-r\right)\leq \F(-a,b;2b;r)^{\frac{1}{a}}\leq A_\mu\left(\frac{1}{2};1,1-r\right)\quad \forall r\in (0,1)
\end{equation}
 if and only if
$
\lambda\leq p_E, $ and $ \mu\geq p_0.
$
At the two sharp orders,
\begin{equation} \label{symsharp}
A_{p_E}\left(\frac{1}{2};1,1-r\right)<\F(-a,b;2b;r)^{\frac{1}{a}}<A_{p_0}\left(\frac{1}{2};1,1-r\right),\quad \forall r\in (0,1).
\end{equation}
\end{theorem}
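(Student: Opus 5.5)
We work throughout with $h=1-r$, $w=\tfrac12$, and with $F(r)={}_2F_1(-a,b;2b;r)$. Since $a>1>0$, the target inequality $A_p(\tfrac12;1,1-r)\lessgtr F^{1/a}$ is equivalent to $\Phi_p(r):=\bigl[\tfrac12+\tfrac12(1-r)^p\bigr]^{a/p}\lessgtr F(r)$. The proof has four steps: the range of orders, local obstructions at the two endpoints, reduction of the global inequality to a Wronskian-type residual, and the sign analysis of that residual.

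\textbf{Step 1: the range \eqref{symrange}.} With $c=2b$ we have $p_0=(a+2b)/(1+2b)$, and $a>1$ gives $1<p_0<a$ at once. Next, $p_E=a\log 2/\log(1/C_a)$ with
$$C_a=\frac{\Gamma(b+a)\Gamma(2b)}{\Gamma(b)\Gamma(2b+a)}.$$
The bound $p_E>1$ is equivalent to $C_a>2^{-a}$. The bound $p_E<p_0$ is equivalent to $\log(1/C_a)>\frac{(1+2b)a\log 2}{a+2b}$. I would prove both by treating $a$ as a continuous variable. At $a=1$ we have $C_1=\tfrac12$ and $p_0=1$, so equality holds in both. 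Differentiating $\log C_a$ in $a$ gives $\psi(b+a)-\psi(2b+a)$, and both inequalities should then follow from standard bounds such as $\psi(x+b)-\psi(x)<b/x$-type estimates, or from convexity of $\log\Gamma$. This is routine but fiddly.

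\textbf{Step 2: necessity.} Expanding at $r=0$, $F=1-ar+\frac{a(a-1)b}{2(2b+1)}r^2+\dots$ and $\Phi_p=1-ar+\dots$, with second-order coefficients differing by a quantity proportional to $p-p_0$. This forces $\mu\ge p_0$ and $\lambda\le p_0$ locally, and monotonicity of $A_p$ in $p$ handles the ordering. As $r\to1$, $F(1)=C_a$ and $\Phi_p(1)=2^{-a/p}$. Comparing these forces $\lambda\le p_E$, since for $\lambda>p_E$ the lower bound fails near $r=1$. Since $p_E<p_0$, the conditions combine to $\lambda\le p_E$ and $\mu\ge p_0$. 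Sufficiency for other $\lambda,\mu$ then follows from monotonicity of $p\mapsto A_p$ once \eqref{symsharp} is established.

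\textbf{Step 3: reduction to a residual.} Set $G=\Phi_p$. The hypergeometric operator $L[y]=r(1-r)y''+(2b-(2b-a+1)r)y'+ab\,y$ annihilates $F$. I would compute the residual $R_p=L[G]$ explicitly. Writing $s=(1-r)^p$, the quantity $G\cdot R_p/G$ becomes a rational expression in $s$ times $(1+s)^{-2}$. Next I would use the weighted Wronskian $W=r^{2b}(1-r)^{1+a-2b}\,(F'G-FG')$. Its derivative is
$$W'=-r^{2b-1}(1-r)^{a-2b}\,F\,R_p.$$
Since $W(0)=0$, the sign of $R_p$ on $(0,1)$ controls the monotonicity of $F/G$, and $F/G\to1$ at $0$. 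For $p=p_0$ the leading term of $R_p$ at $r=0$ vanishes by construction, and I expect the residual to have a single sign: numerator $\propto r^2\,q(s)$ with $q$ of fixed sign when $a>1$. That gives $F<\Phi_{p_0}$.

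\textbf{Step 4: the lower bound at $p=p_E$ (main obstacle).} Here $R_{p_E}$ changes sign once. I would show the numerator polynomial in $s$ has exactly one root in $(0,1)$ via Descartes' rule or convexity. Then $F/G$ is monotone on one side of the root and reversed on the other, so it is unimodal. The conclusion uses two facts: $F/G\to1$ at $r=0$, with the initial direction fixed by $p_E<p_0$ through the second-order term, and $F/G\to C_a2^{a/p_E}=1$ at $r=1$. Unimodality with equal endpoint values then yields $F/G>1$ throughout. Verifying the single sign change, uniformly in $b>0$ and $a>1$, is where I expect the real difficulty to lie. It may need the case split $p_E\le2$ versus $p_E>2$, or a change of variable $s=(1-r)^{p}$, to make the numerator's structure transparent.
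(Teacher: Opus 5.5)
Your architecture matches the paper's: the expansion at $r=0$ and endpoint matching at $r=1$ give necessity, and a weighted Wronskian against the hypergeometric equation gives sufficiency. However, the step that actually carries the theorem, the sign of the residual at $p_0$ and at $p_E$, is only announced (``I expect'', ``I would show''), and the structure you posit for it is wrong. The correct operator is $L[y]=r(1-r)y''+\bigl(2b-(b-a+1)r\bigr)y'+ab\,y$: the coefficient is $b-a+1$, not $2b-a+1$, and the matching weight is $r^{2b}(1-r)^{1-a-b}$. With $x=1-r$ one finds
\[
L[\Phi_p]=\frac{a\,\Phi_p\,x^{p-1}(1-x)}{(1+x^{p})^{2}}\Bigl[p-a-b+b\,\frac{x^{1-p}-x^{p}}{1-x}\Bigr].
\]
This is not a rational function of $s=x^p$, since it mixes $x$ and $x^p$ with non-integer $p$. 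The numerator $bx+(p-a-b)(x^{p}-x^{p+1})-bx^{2p}$ is a generalized polynomial whose coefficients have three sign changes, and one of its roots is $x=1$. So Descartes' rule alone does not give a single root in $(0,1)$.

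The missing idea is that under $x=e^{-2u}$ the bracket is exactly the paper's $S_p(u)=p-a+b\bigl(\sinh(ku)/\sinh u-1\bigr)$ with $k=2p-1$. For $k>1$, $\sinh(ku)/\sinh u$ is strictly increasing (Lemma~\ref{hyp}).
\begin{itemize}
\item At $p_0$: since $S_{p_0}(0^+)=0$ by the choice of $p_0$, and $p_0>1$, this gives $S_{p_0}>0$ at once.
\item At $p_E$: since $p_E>1$, $S_{p_E}$ is strictly increasing, so it changes sign at most once. The equal limits of $F/\Phi_{p_E}$ at both ends rule out a constant sign.
\end{itemize}
If you prefer Descartes, you need the extra observation that the bracket is invariant under $x\mapsto 1/x$, which pairs off the surplus roots. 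Without one of these arguments, your Steps 3 and 4 are not proofs.

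Step 1 also has a gap. You defer $p_E<p_0$ to a digamma inequality you call routine. It is delicate: both sides agree to leading order as $b\to\infty$. It is also unnecessary. Once $F<\Phi_{p_0}$ is known with $F/\Phi_{p_0}$ strictly decreasing, which your Wronskian step would deliver, letting $r\to1$ gives $2^{-1/p_E}<2^{-1/p_0}$, i.e.\ $p_E<p_0$. This is how the paper does it.

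It also fixes the logical order. The $p_0$ bound must be proved before the $p_E$ analysis, because your Step 4 uses $p_E<p_0$ for the initial direction. Likewise $p_E>1$ is one line of Jensen with Euler's integral for $C_a$: $C_a=\int_0^1(1-t)^a\,d\nu>\bigl(\int_0^1(1-t)\,d\nu\bigr)^a=2^{-a}$.
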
 

\subsection{Related works}
Standard references for hypergeometric functions, elliptic integrals and their connection with classical means include \cite{AndrewsAskeyRoy1999,BorweinBorwein1987,AndersonVamanamurthyVuorinen1997}. 
Early inequalities for Gauss hypergeometric functions and complete elliptic integrals were developed in \cite{Carlson1966,AndersonVamanamurthyVuorinen1990,AndersonVamanamurthyVuorinen1992,Anderson1995,PonnusamyVuorinen1997,AndersonQiuVamanamurthy1998,QiuVuorinen1999,AndersonQiuVamanamurthyVuorinen2000}. Later Tur\'an-type, monotonicity, log-convexity and ratio estimates for hypergeometric functions can be found in \cite{Baricz2007,KarpSitnik2009,KarpSitnik2010,Richards2019,BarnardRichardsSliheet2020}. 
Sharp mean-type estimates closer to the present problem appear in \cite{BarnardPearceRichards2000,AlzerQiu2004,BarnardRichards2007}. 
Related sharp mean inequalities and convexity and concavity properties for complete or generalized elliptic integrals were studied, for example, in \cite{WangChuQiuJiang2012,WangChuChu2019,WangHeChu2020,ZhaoHeChu2021}.

The two conjectures considered above arose from the search for simultaneous sharp power-mean bounds. For $a\leq1$ in the two parameter branches below, Richards \cite{Richards2005} proved the sharp $p_0$-bound; this result is restated in \cite[Theorem B]{BarnardRichardsTiedeman2010}. Building on this bound and numerical evidence, Barnard, Richards and Tiedeman \cite[Section 3]{BarnardRichardsTiedeman2010} formulated two companion conjectures. Their Conjecture I asks for the missing $p_E$-bound in the range $a\leq1$. Our formulation above combines that conjectured bound with Richards' known $p_0$-bound. Its original $1/a$-power formulation does not explicitly include $a=0$; formulas \eqref{H0} and \eqref{pE0} give the natural logarithmic extension. Their Conjecture II concerns $a>1$, $c=2b$, and asks for the sharp lower bound of order $p_E$ \cite[Conjecture II]{BarnardRichardsTiedeman2010}.

As supporting cases, Barnard, Richards and Tiedeman established the bounds in Conjecture I for $b=1$, $c=2$ and $-1<a<1$ in \cite[Proposition 1]{BarnardRichardsTiedeman2010}, with $a=0$ interpreted by continuity, and verified Conjecture II for $b=1$, $c=2$ in \cite[Proposition 2]{BarnardRichardsTiedeman2010}. Zhao \cite{Zhao2023} later proved Conjecture I on the curve
$a=b=\frac{1}{q}\in\left(0,\frac{1}{2}\right),\quad c=1.$

In our paper, Theorem \ref{thm1} proves Conjecture I in both parameter branches, including its logarithmic extension at $a=0$. Theorem \ref{thm2} proves Conjecture II and, in addition, supplies the sharp companion upper bound of order $p_0$. Thus the main results in Subsection 1.2 settle both conjectures in their full parameter ranges and strengthen them to complete classifications of all lower and upper order.

\subsection{Strategies and ideas}
The main difficulty is that the candidate sharp orders arise from different endpoint mechanisms: local behavior at $r=0$ selects $p_0$, while endpoint matching as $r\to1$ selects $p_E$. Neither calculation alone controls the comparison on the entire interval. By uncovering a weighted-Wronskian structure governed by a scalar residual, we connect these two endpoint constraints and obtain a complete classification of all admissible lower and upper power-mean orders. The proof has four main components.
\begin{itemize}
\item \textit{Endpoint orders.} The second-order expansion at $r=0$ forces $p_0$, while the beta-integral limit as $r\to1$ forces $p_E$.

\item \textit{Differential-equation reduction.} For $a\neq0$, the change $r=1-\mathrm{e}^{-2u}$ yields a positive solution $f$ of $\mathcal{L}f=0$ and a power-mean comparison function $g_p$ satisfying
\begin{equation*}
\mathcal{L}g_p=\frac{ag_p}{\cosh^2(pu+v_0)}S_p(u),
\end{equation*}
so the comparison is governed by the scalar residual $S_p$.

\item \textit{Weighted-Wronskian comparison.} With $m(u)=(\sinh u)^c\mathrm{e}^{(c-2b)u}$,
\begin{equation*}
\begin{aligned}
W_p(u)&=m(u)\bigl(f'(u)g_p(u)-f(u)g_p'(u)\bigr),\\
W_p'(u)&=-\frac{am(u)f(u)g_p(u)}{\cosh^2(pu+v_0)}S_p(u).
\end{aligned}
\end{equation*}
Outside the identity cases, $S_{p_0}$ has a fixed nonzero sign, while $S_{p_E}$ has one sign change and $f/g_{p_E}$ returns to its matched endpoint value. In the identity cases the residual vanishes identically.

\item \textit{Logarithmic and symmetric cases.} The case $a=0$ follows from the additive analogue. For $a>1$, $c=2b$, the method gives the $p_E$ lower bound and the sharp companion $p_0$ upper bound.
\end{itemize}

\subsection{Structure of the paper}

The paper is organized as follows. Section \ref{sec-2} is the core of our work, which develops the common analytic framework. Subsection 2.1 derives the endpoint calculations that determine $p_0$ and $p_E$ and proves the necessary restrictions on all admissible orders. Subsection 2.2 obtains the second-order differential equation, isolates the residual $S_p$, and records the auxiliary hyperbolic estimates used to determine its sign. Subsection 2.3 converts this sign information into global inequalities through the weighted-Wronskian comparison, while Subsection 2.4 establishes the additive logarithmic analogue for $a=0$. Sections 3 and 4 prove 
Theorems \ref{thm1} and Theorem \ref{thm2}, respectively. Appendix A contains the auxiliary proofs.

\section{Endpoint orders and the comparison principle} \label{sec-2}
Throughout this section, unless otherwise stated, we assume that
$c>b>0$. Whenever $a<0$ and the endpoint
$r\to1^-$ is considered, we additionally assume that $c+a-b>0$.

\subsection{Endpoint orders and the beta representation}
The purpose of this subsection is to identify the orders forced by the two endpoints and thereby establish the necessary parts of the sharp classifications.

On $(0,1)$ introduce the normalized beta measure
\begin{equation} \label{beta}
\mathrm{d}\nu(t)=\frac{t^{b-1}(1-t)^{c-b-1}}{B(b,c-b)} \mathrm{d}t.
\end{equation}
Thus $\nu((0,1))=1$. For later use, the first two moments are
\begin{equation} \label{moments}
\int_0^1 t \mathrm{d}\nu(t)=\frac{B(b+1,c-b)}{B(b,c-b)}=\frac{b}{c},\quad \int_0^1 t^2 \mathrm{d}\nu(t)=\frac{B(b+2,c-b)}{B(b,c-b)}=\frac{b(b+1)}{c(c+1)}.
\end{equation}

Euler's integral representation gives, for $a\neq 0$,
\begin{equation} \label{euler}
H_a(r)^a=\int_0^1(1-rt)^a \mathrm{d}\nu(t).
\end{equation}
Formula \eqref{euler} is Euler's integral representation; see \cite[Eqs. (15.2.1), (15.2.2), and (15.6.1)]{DLMF}.
For $a=0$, formula \eqref{H0} reads
\begin{equation*}
\log H_0(r)=\int_0^1\log(1-rt) \mathrm{d}\nu(t).
\end{equation*}

The next expansion identifies the order forced at the origin and will be used repeatedly for local sharpness.

\begin{lemma}
Let $c>b>0$, and fix $a,p\in\mathbb{R}$. With $H_0$ understood as in \eqref{H0}, one has, as $r\to0^+$,
\begin{equation} \label{diffexp}
H_a(r)-A_p(w;1,1-r)
=
\frac{b(c-b)}{2c^2}(p_0-p)r^2+O(r^3).
\end{equation}
More precisely, for each fixed choice of $a,b,c,p$, there exist constants
$\delta=\delta(a,b,c,p)>0$ and $C=C(a,b,c,p)>0$ such that
\begin{equation*}
\left|
H_a(r)-A_p(w;1,1-r)
-\frac{b(c-b)}{2c^2}(p_0-p)r^2
\right|
\leq Cr^3
\end{equation*}
for every $0<r<\delta$.
\end{lemma}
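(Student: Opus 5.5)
We expand both $H_a(r)$ and $A_p(w;1,1-r)$ in powers of $r$ up to second order, with an explicit $O(r^3)$ remainder, and compare the coefficients. Both functions are real-analytic near $r=0$ and take the value $1$ at $r=0$. So it suffices to compute their first two Taylor coefficients; the uniform bound $Cr^3$ on some $(0,\delta)$ then follows from Taylor's theorem applied to the analytic difference, with $\delta$ smaller than the radius of convergence.

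\emph{Expansion of $H_a$.} Set $m_1=b/c$ and $m_2=\frac{b(b+1)}{c(c+1)}$, the moments in \eqref{moments}. For $a\neq0$, expand $(1-rt)^a=1-art+\frac{a(a-1)}{2}r^2t^2+O(r^3)$ uniformly in $t\in[0,1]$ and integrate against $\nu$ using \eqref{euler}. This gives
\[
H_a(r)^a=1-am_1r+\tfrac{a(a-1)}{2}m_2r^2+O(r^3).
\]
Writing $H_a=\exp\bigl(\frac1a\log H_a^a\bigr)$ and expanding $\log(1+x)=x-\frac{x^2}{2}+O(x^3)$, we get
\[
\log H_a(r)=-m_1r+\tfrac{r^2}{2}\bigl((a-1)m_2-am_1^2\bigr)+O(r^3).
\]
For $a=0$, expand $\log(1-rt)=-rt-\frac{r^2t^2}{2}+O(r^3)$ and integrate; this yields the same formula evaluated at $a=0$. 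Hence a single formula covers every real $a$.

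\emph{Expansion of $A_p$.} For $p\neq0$, we have $(1-r)^p=1-pr+\frac{p(p-1)}{2}r^2+O(r^3)$. Since $1-w=m_1$,
\[
w+(1-w)(1-r)^p=1-pm_1r+\tfrac{p(p-1)}{2}m_1r^2+O(r^3).
\]
The same logarithmic expansion as above then gives
\[
\log A_p=-m_1r+\tfrac{r^2}{2}\bigl((p-1)m_1-pm_1^2\bigr)+O(r^3).
\]
For $p=0$, we have $\log A_0=m_1\log(1-r)=-m_1r-\frac{m_1}{2}r^2+O(r^3)$, which agrees with the same formula at $p=0$.

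\emph{Comparison.} The first-order terms of the two logarithms coincide. Exponentiating, with both functions equal to $1+O(r)$, we find
\[
H_a-A_p=\tfrac{r^2}{2}\bigl[(a-1)m_2-am_1^2-(p-1)m_1+pm_1^2\bigr]+O(r^3).
\]
The bracket is linear in $p$ with coefficient $m_1^2-m_1=-\frac{b(c-b)}{c^2}$. At $p=p_0$ it must vanish. To check this, compute
\[
m_1-m_2=\frac{b(c-b)}{c(c+1)},\qquad m_2-m_1^2=\frac{b(c-b)}{c^2(c+1)}.
\]
The bracket equals $a(m_2-m_1^2)+p(m_1^2-m_1)+(m_1-m_2)$. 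Factoring out $\frac{b(c-b)}{c^2}$ leaves $\frac{a}{c+1}-p+\frac{c}{c+1}=p_0-p$. This gives the coefficient $\frac{b(c-b)}{2c^2}(p_0-p)$.

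The main point requiring care is to make the $O(r^3)$ bounds uniform, for fixed parameters, in the Euler integral expansion. This is where $\nu$ being a probability measure and $t\in[0,1]$ are used: the Taylor remainder of $(1-rt)^a$ or $\log(1-rt)$ is bounded by $C r^3$ for $r\le 1/2$, uniformly in $t$. The remaining compositions with $\exp$ and $\log$ are standard.
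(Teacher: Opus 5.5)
Your proposal is correct and follows essentially the same route as the paper: expand $H_a$ and $A_p$ to second order at $r=0$ and compare the $r^2$ coefficients, using the moments $b/c$ and $\frac{b(b+1)}{c(c+1)}$. The only difference is bookkeeping. You expand $\log H_a$ and $\log A_p$ for every choice of parameters, which folds the cases $a=0$ and $p=0$ into one formula. The paper instead expands $H_a$ and $A_p$ directly via $(1+x)^{1/a}$ and $(1+x)^{1/p}$ and treats $a=0$ and $p=0$ separately.
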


\begin{proof}
\textbf{Step 1:}
Claim that as $r\to 0^+,$
\begin{equation*}
H_a(r)=1-\frac bc r+\frac{(a-1)b(c-b)}{2c^2(c+1)}r^2+O(r^3).
\end{equation*}

Assume first that $a\neq 0$. The hypergeometric series gives
\begin{equation*}
\F(-a,b;c;r) =1-\frac{ab}{c}r +\frac{a(a-1)b(b+1)}{2c(c+1)}r^2 +O(r^3).
\end{equation*}
For $x\to0$,
\begin{equation*}
(1+x)^{\frac{1}{a}} =1+\frac{x}{a} +\frac{1}{2a}\left(\frac{1}{a}-1\right)x^2 +O(x^3).
\end{equation*}
Here
\begin{equation*}
x=-\frac{ab}{c}r +\frac{a(a-1)b(b+1)}{2c(c+1)}r^2 +O(r^3),
\end{equation*}
and hence
\begin{equation*}
x^2=\frac{a^2b^2}{c^2}r^2+O(r^3).
\end{equation*}
Substitution yields
\begin{equation*}
\begin{aligned}
H_a(r) &=1-\frac{b}{c}r +\frac{(a-1)b(b+1)}{2c(c+1)}r^2 +\frac{(1-a)b^2}{2c^2}r^2 +O(r^3)\\
&=1-\frac{b}{c}r +\frac{a-1}{2} \left[ \frac{b(b+1)}{c(c+1)}-\frac{b^2}{c^2} \right]r^2 +O(r^3).
\end{aligned}
\end{equation*}
The corresponding expansion for $H_a$ follows for $a\neq 0$.

Next considering $a=0$. Uniformly for $t\in[0,1]$ and $0<r\le\frac{1}{2}$,
\begin{equation*}
\log(1-rt)=-rt-\frac{r^2t^2}{2}+O(r^3).
\end{equation*}
After integration and use of \eqref{moments},
\begin{equation*}
\log H_0(r) =-\frac{b}{c}r -\frac{b(b+1)}{2c(c+1)}r^2 +O(r^3).
\end{equation*}
Writing the right-hand side as $y$, one has
\begin{equation*}
\mathrm{e}^y=1+y+\frac{y^2}{2}+O(r^3), \quad y^2=\frac{b^2}{c^2}r^2+O(r^3).
\end{equation*}
Therefore
\begin{equation*}
\begin{aligned}
H_0(r) &=1-\frac{b}{c}r +\frac{1}{2}\left( \frac{b^2}{c^2} -\frac{b(b+1)}{c(c+1)} \right)r^2 +O(r^3)\\
&=1-\frac{b}{c}r -\frac{b(c-b)}{2c^2(c+1)}r^2 +O(r^3),
\end{aligned}
\end{equation*}
which is the same expansion at $a=0$.

\textbf{Step 2:}
Claim that as $r\to 0^+$,
\begin{equation*}
A_p(w;1,1-r)= 1-\frac{b}{c}r +\frac{(p-1)b(c-b)}{2c^2}r^2 +O(r^3).
\end{equation*}

Fix now $p\neq 0$. Recall~\eqref{orders-1}, $1-w=\dfrac{b}{c}$,
\begin{equation*}
w+(1-w)(1-r)^p =1-\frac{bp}{c}r +\frac{bp(p-1)}{2c}r^2 +O(r^3).
\end{equation*}
For $x\to0$,
\begin{equation*}
(1+x)^{\frac{1}{p}} =1+\frac{x}{p} +\frac{1}{2p}\left(\frac{1}{p}-1\right)x^2 +O(x^3).
\end{equation*}
Here
\begin{equation*}
x=-\frac{bp}{c}r +\frac{bp(p-1)}{2c}r^2+O(r^3),
\end{equation*}
and hence
\begin{equation*}
x^2=\frac{b^2p^2}{c^2}r^2+O(r^3),
\end{equation*}
which yields 
\begin{equation*}
\begin{aligned}
A_p(w;1,1-r) &=1-\frac{b}{c}r +\frac{b(p-1)}{2c}r^2 +\frac{(1-p)b^2}{2c^2}r^2 +O(r^3)\\
&=1-\frac{b}{c}r +\frac{(p-1)b(c-b)}{2c^2}r^2 +O(r^3).
\end{aligned}
\end{equation*}
For $p=0$,
\begin{equation*}
\begin{aligned}
A_0(w;1,1-r) &=(1-r)^{\frac{b}{c}} =1-\frac{b}{c}r +\frac{b(b-c)}{2c^2}r^2 +O(r^3),
\end{aligned}
\end{equation*}
which is the same expansion at $p=0$.

Combining Step 1 and Step 2,
\begin{equation*}
\begin{aligned}
H_a(r)-A_p(w;1,1-r) &=\frac{b(c-b)}{2c^2} \left( \frac{a-1}{c+1}-(p-1) \right)r^2 +O(r^3)\\
&=\frac{b(c-b)}{2c^2}(p_0-p)r^2+O(r^3),
\end{aligned}
\end{equation*}
which proves \eqref{diffexp}.
\end{proof}

The coefficient in \eqref{diffexp} gives the necessary exponent restriction at $r=0$.

\begin{corollary} \label{localsharp}
If $A_\lambda(w;1,1-r)\leq H_a(r)$ for every sufficiently small $r>0$, then $\lambda\leq p_0$. If $H_a(r)\leq A_\mu(w;1,1-r)$ for every sufficiently small $r>0$, then $\mu\geq p_0$.
\end{corollary}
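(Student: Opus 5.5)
The plan is to argue by contraposition, using the quantitative expansion \eqref{diffexp} from the preceding lemma. The key point is that the leading coefficient $\frac{b(c-b)}{2c^2}$ is strictly positive, because $c>b>0$. So the sign of $H_a(r)-A_p(w;1,1-r)$ for small $r$ is the sign of $p_0-p$ whenever $p\neq p_0$.

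For the first assertion, suppose $\lambda>p_0$ and apply the lemma with $p=\lambda$. This gives constants $\delta,C>0$ such that for $0<r<\delta$,
\begin{equation*}
H_a(r)-A_\lambda(w;1,1-r)\leq -\frac{b(c-b)}{2c^2}(\lambda-p_0)r^2+Cr^3 .
\end{equation*}
Now choose
\begin{equation*}
0<r<\min\Bigl(\delta,\ \frac{b(c-b)(\lambda-p_0)}{2c^2C}\Bigr).
\end{equation*}
For every such $r$ the right-hand side is strictly negative, so $H_a(r)<A_\lambda(w;1,1-r)$. These $r$ can be taken arbitrarily close to $0$. This contradicts the hypothesis that $A_\lambda(w;1,1-r)\leq H_a(r)$ for all sufficiently small $r>0$. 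Hence $\lambda\leq p_0$.

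The second assertion is symmetric. Suppose $\mu<p_0$ and apply the lemma with $p=\mu$. Then
\begin{equation*}
H_a(r)-A_\mu(w;1,1-r)\geq \frac{b(c-b)}{2c^2}(p_0-\mu)r^2-Cr^3,
\end{equation*}
and the right-hand side is strictly positive once $r<\min\bigl(\delta,\ b(c-b)(p_0-\mu)/(2c^2C)\bigr)$. This contradicts $H_a(r)\leq A_\mu(w;1,1-r)$ near $0$, so $\mu\geq p_0$.

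There is no real obstacle here. The only point to check is that the lemma's uniform $O(r^3)$ bound holds for each fixed $p$, including $p=0$, and for each fixed $a$, including $a=0$. The lemma explicitly covers these cases, so the argument applies for every real $\lambda,\mu$.
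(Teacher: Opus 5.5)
Your proposal is correct and follows essentially the same route as the paper: both argue by contradiction from the expansion \eqref{diffexp}, using that the $r^2$ coefficient $\frac{b(c-b)}{2c^2}(p_0-p)$ has a strict sign when $p\neq p_0$. The only difference is cosmetic: you make the threshold on $r$ explicit through the constant $C$ of the lemma, whereas the paper absorbs the cubic term by bounding the difference by half the leading coefficient for small $r$.
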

\begin{proof}
Suppose first that $\lambda>p_0$. By \eqref{diffexp},
\begin{equation*}
H_a(r)-A_\lambda(w;1,1-r) =-\frac{b(c-b)}{2c^2}(\lambda-p_0)r^2+O(r^3).
\end{equation*}
The coefficient of $r^2$ is strictly negative. Hence there exists $r_0>0$, depending on the fixed parameters $a,b,c,\lambda$, such that for $0<r<r_0$,
\begin{equation*}
H_a(r)-A_\lambda(w;1,1-r) \leq -\frac{b(c-b)}{4c^2}(\lambda-p_0)r^2<0,
\end{equation*}
which implies the first conclusion. The latter one follows similarly.
\end{proof}

The second sharp order is determined by the behavior at $r=1$.
By \eqref{moments},
\begin{equation*}
\int_0^1(1-t)\,\mathrm{d}\nu(t)
=
1-\frac{b}{c}
=
w.
\end{equation*}
For $a\neq0$, provided that $c+a-b>0$, direct evaluation gives
\begin{equation*}
\int_0^1(1-t)^a\,\mathrm{d}\nu(t)
=
\frac{B(b,c-b+a)}{B(b,c-b)}
=
\frac{\Gamma(c+a-b)\Gamma(c)}
{\Gamma(c-b)\Gamma(c+a)}
=
C_a.
\end{equation*}

We now compare the corresponding endpoint quantity with $w$. Since
$t\mapsto1-t$ is not constant $\nu$-almost everywhere, the relevant
forms of Jensen's inequality are strict. There are three cases.

\medskip
\noindent\emph{Case 1: $0<a<1$.}
The function $x\mapsto x^a$ is strictly concave on $(0,\infty)$.
Hence
\begin{equation*}
C_a
=
\int_0^1(1-t)^a\,\mathrm{d}\nu(t)
<
\left(\int_0^1(1-t)\,\mathrm{d}\nu(t)\right)^a
=
w^a.
\end{equation*}
Since $1/a>0$, taking the power $1/a$ preserves the inequality.

\medskip
\noindent\emph{Case 2: $a<0$.}
The function $x\mapsto x^a$ is strictly convex on $(0,\infty)$.
Therefore
\begin{equation*}
C_a
=
\int_0^1(1-t)^a\,\mathrm{d}\nu(t)
>
\left(\int_0^1(1-t)\,\mathrm{d}\nu(t)\right)^a
=
w^a.
\end{equation*}
Since $1/a<0$, taking the power $1/a$ reverses the inequality.

Thus, in both nonzero cases,
\begin{equation} \label{momentless}
C_a^{\frac{1}{a}}<w.
\end{equation}

\medskip
\noindent\emph{Case 3: $a=0$.}
Since $\log x$ is strictly concave on $(0,\infty)$, strict Jensen
inequality gives
\begin{equation*}
L_0
=
\int_0^1\log(1-t)\,\mathrm{d}\nu(t)
<
\log\left(\int_0^1(1-t)\,\mathrm{d}\nu(t)\right)
=
\log w.
\end{equation*}
Exponentiating yields
\begin{equation} \label{geomless}
\exp(L_0)<w.
\end{equation}

Consequently, for every $a<1$, the endpoint quantity associated with
$H_a$ is strictly smaller than $w$. The next lemma identifies these endpoint quantities as the limits of
$H_a$ and records the corresponding limits of the power means.

\begin{lemma} \label{endpoint}
Let $c>b>0$. If $a\neq 0$, assume $c+a-b>0$. Then
\begin{equation} \label{Hend}
\lim_{r\to1^-}H_a(r)=\begin{cases}C_a^{\frac{1}{a}}=w^{\frac{1}{p_E}},&a\neq 0,\\
\displaystyle\exp\left(\int_0^1\log(1-t) \mathrm{d}\nu(t)\right)=w^{\frac{1}{p_E}},&a=0,\end{cases}
\end{equation}
where $p_E$ is defined in \eqref{orders}, \eqref{pE0}. For $p>0$,
\begin{equation} \label{Aend}
\lim_{r\to1^-}A_p(w;1,1-r)=w^{\frac{1}{p}},
\end{equation}
while the limit is $0$ for $p\le0$.
\end{lemma}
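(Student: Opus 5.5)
We pass to the limit inside the Euler representation \eqref{euler} (and inside the logarithmic representation \eqref{H0} when $a=0$), and then read off $w^{1/p_E}$ from the definitions \eqref{orders}, \eqref{pE0}.

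\emph{Case $a>0$.} For $t\in[0,1]$ and $r\in(0,1)$ we have $0\le(1-rt)^a\le1$, and $(1-rt)^a\to(1-t)^a$ as $r\to1^-$. Dominated convergence with respect to the probability measure $\nu$ gives
\[
H_a(r)^a\to\int_0^1(1-t)^a\,\mathrm{d}\nu(t)=C_a .
\]

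\emph{Case $a<0$.} Now $(1-rt)^a$ increases as $r\uparrow1$, for each fixed $t$. Monotone convergence gives the same limit $C_a$, and $C_a$ is finite because $c+a-b>0$ makes $t^{b-1}(1-t)^{c-b+a-1}$ integrable at $t=1$. Alternatively, one can dominate by $(1-t)^a\in L^1(\nu)$.

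In both nonzero cases $C_a>0$, so by continuity of $x\mapsto x^{1/a}$ on $(0,\infty)$,
\[
\lim_{r\to1^-}H_a(r)=C_a^{1/a}.
\]
To identify this with $w^{1/p_E}$, observe that $\log C_a\neq0$ by \eqref{momentless}: $C_a^{1/a}<w<1$ forces $\frac1a\log C_a<0$. Hence $p_E=a\log w/\log C_a$ is well defined and nonzero, and
\[
w^{1/p_E}=\exp\!\Big(\frac{\log C_a}{a\log w}\log w\Big)=C_a^{1/a}.
\]

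\emph{Case $a=0$.} The integrand $\log(1-rt)$ is nonpositive and decreases monotonically to $\log(1-t)$ as $r\uparrow1$. Monotone convergence (applied to $-\log(1-rt)\ge0$) yields
\[
\int_0^1\log(1-rt)\,\mathrm{d}\nu(t)\to L_0 .
\]
Here $L_0$ is finite, since $\log(1-t)(1-t)^{c-b-1}$ is integrable at $1$ when $c>b$. Moreover $L_0<0$, so $p_E=\log w/L_0$ is well defined, and $w^{1/p_E}=e^{L_0}$. Exponentiating the limit gives \eqref{Hend}.

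\emph{The power means, \eqref{Aend}.} For $p>0$,
\[
w+(1-w)(1-r)^p\to w>0,
\]
so $A_p(w;1,1-r)\to w^{1/p}$. For $p=0$, $A_0(w;1,1-r)=(1-r)^{1-w}\to0$ because $1-w=b/c>0$. For $p<0$, $(1-r)^p\to+\infty$, so the bracket tends to $+\infty$, and raising it to the negative power $1/p$ gives $0$.

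The only point needing care is the justification of the limit interchange for $a<0$ and $a=0$, where the integrand is unbounded. This is handled by monotone convergence together with integrability of the endpoint value, which comes from $c+a-b>0$ and $c>b$ respectively.
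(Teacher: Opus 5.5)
Your proof is correct and is essentially the paper's: you pass to the limit inside the Euler (resp.\ logarithmic) integral and then evaluate the power-mean limits directly. The paper uses dominated convergence with the majorants $(1-t)^a$ and $-\log(1-t)$, which you mention as the alternative to monotone convergence.

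The one slip is citing \eqref{momentless} to get $\log C_a\neq0$. That inequality holds only for $a<1$; for $a\geq1$, which the lemma allows and Section~4 uses, one has $C_a^{1/a}\geq w$. Argue instead that $0<C_a<1$ for $a>0$ and $C_a>1$ for $a<0$, which follows immediately from $(1-t)^a<1$, resp.\ $(1-t)^a>1$, on $(0,1)$. This gives $\frac1a\log C_a<0$ in all cases.
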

\begin{proof}
If $a>0$, the integrand $(1-rt)^a$ in \eqref{euler} is bounded by $1$ and converges to $(1-t)^a$. If $a<0$, then $1-rt\geq 1-t$, so
\begin{equation*}
0<(1-rt)^a\le(1-t)^a.
\end{equation*}
The function $(1-t)^a$ is integrable with respect to $\nu$ precisely when $c+a-b>0$. Dominated convergence therefore gives the first line of \eqref{Hend} for $a\neq 0$.

For $a=0$, $\log(1-rt)\to\log(1-t)$. Since $1-rt\geq 1-t$ and both numbers lie in $(0,1]$,
\begin{equation*}
0\le-\log(1-rt)\le-\log(1-t).
\end{equation*}
The product of $-\log(1-t)$ with the beta density is $O((1-t)^{c-b-1}|\log(1-t)|)$ as $t\to1^-$ 
and $O(t^b)$ as $t\to0^+$.
Both bounds are integrable because $b>0$ and $c-b>0$.
Dominated convergence therefore proves the logarithmic case.

Now we consider $\lim_{r\to1^-}A_p(w;1,1-r)$. For $p>0$, \eqref{Aend} follows from \eqref{mean} directly.
If $p=0$, then $A_0=(1-r)^{1-w}\to0$ because $1-w=\frac{b}{c}>0$. 
If $p<0$, factor $(1-r)^p$ inside the bracket:
\begin{equation*}
A_p(w;1,1-r)=(1-r)\left[w(1-r)^{-p}+1-w\right]^{\frac{1}{p}},
\end{equation*}
which implies $A_p\to0$.
\end{proof}

Comparison of these endpoint values with the power mean gives the necessary restriction at $p_E$.
Notice that under the assumptions of Lemma \ref{endpoint} , the endpoint order $p_E$ is strictly positive.
\begin{corollary} \label{endpointsharp}
With the same hypotheses as in Lemma \ref{endpoint},
\begin{enumerate}
\item If
\begin{equation*}
A_\lambda(w;1,1-r)\leq H_a(r)\quad(0<r<1),
\end{equation*}
then $\lambda\leq p_E$. 
\item If
\begin{equation*}
H_a(r)\leq A_\mu(w;1,1-r)\quad(0<r<1),
\end{equation*}
then $\mu\geq p_E$. 
\end{enumerate}
\end{corollary}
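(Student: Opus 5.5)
The plan is to let $r\to1^-$ in the assumed inequality and compare the two limits supplied by Lemma~\ref{endpoint}. The facts needed are $0<w<1$, the positivity $p_E>0$ noted before the corollary, and that $p\mapsto w^{1/p}$ is strictly increasing on $(0,\infty)$, since $\log w<0$.

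For part (1), suppose $A_\lambda(w;1,1-r)\le H_a(r)$ on $(0,1)$. By \eqref{Hend} and \eqref{Aend}, letting $r\to1^-$ gives
\[
\lim_{r\to1^-}A_\lambda(w;1,1-r)\le w^{1/p_E}.
\]
If $\lambda\le0$, then $\lambda\le p_E$ already holds because $p_E>0$, so nothing needs to be shown. If $\lambda>0$, the left-hand limit is $w^{1/\lambda}$, so $w^{1/\lambda}\le w^{1/p_E}$. Taking logarithms and dividing by $\log w<0$ gives $1/\lambda\ge 1/p_E$, and since both numbers are positive, $\lambda\le p_E$.

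For part (2), suppose $H_a(r)\le A_\mu(w;1,1-r)$ on $(0,1)$. Passing to the limit gives $w^{1/p_E}\le\lim_{r\to1^-}A_\mu(w;1,1-r)$. If $\mu\le0$, the right-hand limit is $0$ by Lemma~\ref{endpoint}, which contradicts $w^{1/p_E}>0$; hence $\mu>0$. Then $w^{1/p_E}\le w^{1/\mu}$, which gives $1/p_E\ge1/\mu$, and therefore $\mu\ge p_E$.

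The only point that needs care is the positivity of $p_E$ and the identification of the limit of $H_a$ as $w^{1/p_E}$. Both follow from the definitions \eqref{orders} and \eqref{pE0} together with \eqref{Hend}. For $a\ne0$, $p_E=a\log w/\log C_a=\log w/\log(C_a^{1/a})$, and $C_a^{1/a}$ is positive and not equal to $1$. For $a<1$ its logarithm is negative by \eqref{momentless} and \eqref{geomless}, and for $a\ge1$ it is negative because $C_a^{1/a}=\lim_{r\to1^-}H_a(r)<1$. The case $a=0$ is the same argument with $L_0<0$.
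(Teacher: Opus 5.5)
Your proof is correct and follows the paper's argument essentially verbatim. You pass to $r\to1^-$ using Lemma~\ref{endpoint}, dispose of nonpositive orders via $p_E>0$ (respectively the vanishing limit of $A_\mu$), and conclude by the strict monotonicity of $p\mapsto w^{1/p}$ on $(0,\infty)$.

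One small imprecision is in your side remark on $p_E>0$ for $a\ge1$. A limit of values $H_a(r)<1$ only gives $\le 1$, not $<1$. You should instead cite directly $C_a=\int_0^1(1-t)^a\,\mathrm{d}\nu(t)<1$, which holds for every $a>0$, together with $C_a>1$ for $a<0$. This gives $\log C_a^{1/a}<0$ for all $a\neq0$ at once.
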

\begin{proof}
If $\lambda\le0$, then $\lambda\leq p_E$. For $\lambda>0$, passage to $r\to1^-$ gives
$w^{\frac{1}{\lambda}}\leq w^{\frac{1}{p_E}}.$
Since
\begin{equation*}
\frac{\mathrm{d}}{\mathrm{d}p}w^{\frac{1}{p}} =w^{\frac{1}{p}}\frac{-\log w}{p^2}>0\quad(p>0),
\end{equation*}
one has $\lambda\leq p_E$.

If $\mu\le0$, then Lemma \ref{endpoint} gives $A_\mu(w;1,1-r)\to0$, whereas $H_a(r)\to w^{\frac{1}{p_E}}>0$. Hence $\mu>0$, and passage to $r\to1^-$ gives
\begin{equation*}
w^{\frac{1}{p_E}}\leq w^{\frac{1}{\mu}},
\end{equation*}
so $\mu\geq p_E$.
\end{proof}

\subsection{The differential equation and the residual}
The endpoint analysis gives only the necessary restrictions. We now begin the sufficiency argument by rewriting the beta representation as a second-order equation and inserting the candidate power mean into the same operator. The resulting scalar residual $S_p$ contains all sign information needed for the global comparison.

Set $v=1-2t$,
the measure \eqref{beta} becomes $\rho(v) \mathrm{d}v$ on $(-1,1)$, where
\begin{equation*}
\rho(v)=\frac{(1-v)^{b-1}(1+v)^{c-b-1}}{2^{c-1}B(b,c-b)}.
\end{equation*}
Considering $r=1-\mathrm{e}^{-2u}$, for $a\neq 0$ we define
\begin{equation} \label{f}
f(u):=\mathrm{e}^{au}H_a(1-\mathrm{e}^{-2u})^a=\int_{-1}^1z(u,v)^a\rho(v) \mathrm{d}v,
\end{equation}
where
\begin{equation*}
z(u,v):=\cosh u+v\sinh u.
\end{equation*}
Since $-1<v<1$ and $u>0$,
$\mathrm{e}^{-u}<z(u,v)<\mathrm{e}^u.$
Fix $U>0$. For $0\leq u\leq U$,
$$
\mathrm{e}^{-U}\leq z(u,v)\leq \mathrm{e}^U,$$ and $$ |z_u(u,v)|+|z_{uu}(u,v)|\le2\mathrm{e}^U.$$
 
The first two $u$-derivatives of $z^a$ are uniformly dominated on $[0,U]\times(-1,1)$, so differentiation under the integral sign is valid.

The Jacobi-type density identity for $\rho$ gives the differential equation satisfied by the transformed hypergeometric function.

\begin{lemma}
Set $d=c-2b$. For $a\neq 0$, the function $f$ satisfies
\begin{equation} \label{ode}
f''+(c\coth u+d)f'-a(a+c+d\coth u)f=0,
\end{equation}
with
\begin{equation} \label{initial}
f(0)=1,\quad f'(0)=\frac{ad}{c}.
\end{equation}
\end{lemma}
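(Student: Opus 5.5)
The plan is to verify \eqref{ode} directly from the integral representation \eqref{f}, by applying the operator to the integrand $z(u,v)^a$ and showing that the result, multiplied by $\rho$, is an exact $v$-derivative whose boundary terms vanish. The differentiation under the integral sign has already been justified above on every $[0,U]$, so the only work is algebraic plus one integration by parts. The equation is understood for $u>0$, since $\coth u$ is singular at $0$.

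\textbf{Step 1: identities for $z$.} From $z=\cosh u+v\sinh u$ we get $z_u=\sinh u+v\cosh u$, $z_{uu}=z$, $z_v=\sinh u$, and the key relation
\begin{equation*}
z^2-z_u^2=(1-v^2)(\cosh^2u-\sinh^2u)=1-v^2 .
\end{equation*}
It follows that $(z^a)_u=az^{a-1}z_u$ and
\begin{equation*}
(z^a)_{uu}=a(a-1)z^{a-2}z_u^2+az^a=a(2a-1)z^a-a(a-1)(1-v^2)z^{a-2}.
\end{equation*}
We also need $z_u$ in terms of $z$: $\sinh u\, z_u=\cosh u\, z-1+v^2\cdot 0$ is not exact, so instead we use $\sinh u\,z_u = z\cosh u - (1-v^2)\cdot 0 + \ldots$. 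Rather than forcing this, we will simply keep $z_u$ and $v$ as they appear.

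\textbf{Step 2: the Jacobi density identity.} Since $(1-v^2)\rho(v)$ is a constant multiple of $(1-v)^b(1+v)^{c-b}$, a direct computation gives
\begin{equation*}
\bigl((1-v^2)\rho(v)\bigr)'=\rho(v)\,(d-cv),\qquad d=c-2b .
\end{equation*}
Consequently
\begin{equation*}
\partial_v\bigl[(1-v^2)\rho\, z^{a-1}\bigr]=\rho\Bigl[(d-cv)z^{a-1}+(a-1)(1-v^2)z^{a-2}\sinh u\Bigr].
\end{equation*}

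\textbf{Step 3: matching.} Write $\mathcal{L}f=f''+(c\coth u+d)f'-a(a+c+d\coth u)f$. Substituting Step 1 expresses $\mathcal{L}(z^a)$ as a combination of $z^a$, $z^{a-1}z_u$ and $(1-v^2)z^{a-2}$ with coefficients in $u$.

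To compare with Step 2, eliminate $v$ using $v\sinh u=z-\cosh u$ and $\sinh u\,z_u=z\cosh u-1+v^2\cdot0$ corrected appropriately; concretely, $z_u\sinh u = \cosh u\,(z-\cosh u)+\sinh^2 u+\ldots$ reduces, via $v=(z-\cosh u)/\sinh u$, to
\begin{equation*}
z_u=\frac{z\cosh u-1}{\sinh u}.
\end{equation*}
The goal is then to prove the pointwise identity
\begin{equation*}
\rho\,\mathcal{L}(z^a)=\frac{a}{\sinh u}\,\partial_v\bigl[(1-v^2)\rho\,z^{a-1}\bigr]\cdot(\text{explicit factor}),
\end{equation*}
and we expect the factor to be exactly $a/\sinh u$ times a constant. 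This is a polynomial identity in $z$, $\cosh u$ and $\sinh u$ once everything is expressed through $z$ and $v$, and it is the main obstacle: a bookkeeping check that every coefficient matches, including the $d\coth u$ and $a^2$ terms.

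\textbf{Step 4: integration and initial data.} Integrating the identity over $v\in(-1,1)$, the right-hand side becomes a boundary term. The expression $(1-v^2)\rho\,z^{a-1}$ is $O((1-v)^b)$ at $v=1$ and $O((1+v)^{c-b})$ at $v=-1$, with $z$ bounded away from $0$ and $\infty$. Since $b>0$ and $c-b>0$, both boundary terms vanish, which yields \eqref{ode}.

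Finally, at $u=0$ we have $z=1$ and $z_u=v$. Hence
\begin{equation*}
f(0)=\int_{-1}^1\rho\,\mathrm{d}v=1,\qquad f'(0)=a\int_{-1}^1 v\rho\,\mathrm{d}v=a\Bigl(1-\frac{2b}{c}\Bigr)=\frac{ad}{c},
\end{equation*}
where the last integral uses \eqref{moments} through $v=1-2t$. This gives \eqref{initial}.
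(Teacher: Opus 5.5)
Your strategy is the right one. It is essentially the paper's argument done pointwise in $v$: use the Jacobi density identity, show that $\rho\,\mathcal{L}(z^a)$ is an exact $v$-derivative, and let the boundary terms vanish. But as written the proposal does not prove the lemma. The only step with real content, Step 3, is left as an expectation with an unspecified (and internally inconsistent) factor, and the input you prepared for it is wrong.

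\textbf{The error in Step 1.} The second derivative is
\[
(z^a)_{uu}=a(a-1)z^{a-2}\bigl(z^2-(1-v^2)\bigr)+az^a=a^2z^a-a(a-1)(1-v^2)z^{a-2},
\]
not $a(2a-1)z^a-\dots$. With your coefficient, $\mathcal{L}(z^a)$ carries an extra $a(a-1)z^a$. After integration this extra term gives $\mathcal{L}f=a(a-1)f$, which is nonzero for $a\neq 0,1$. So the exact-derivative identity you hope for could not have been verified from your Step 1.

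\textbf{The false remark in Step 1.} You say $\sinh u\,z_u=z\cosh u-1$ is \emph{not exact}. It is exact: $z\cosh u-z_u\sinh u=1$ identically. This identity, together with $z_u\cosh u-z\sinh u=v$, is exactly what makes the computation close.

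\textbf{How to complete Step 3.} With the correct second derivative, the $a^2z^a$ terms cancel against $-a\cdot a\,z^a$, and
\[
\mathcal{L}(z^a)=-a(a-1)(1-v^2)z^{a-2}+ac\,z^{a-1}\bigl(z_u\coth u-z\bigr)-ad\,z^{a-1}\bigl(z\coth u-z_u\bigr).
\]
Now use $z_u\coth u-z=v/\sinh u$ and $z\coth u-z_u=1/\sinh u$. This gives
\[
\rho\,\mathcal{L}(z^a)=-\frac{a}{\sinh u}\Bigl[(d-cv)z^{a-1}+(a-1)(1-v^2)\sinh u\,z^{a-2}\Bigr]\rho=-\frac{a}{\sinh u}\,\partial_v\bigl[(1-v^2)\rho\,z^{a-1}\bigr],
\]
so the factor is exactly $-a/\sinh u$.

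Your Step 4 is correct and then finishes the proof: the boundary estimates $O((1-v)^b)$ and $O((1+v)^{c-b})$, the justification for differentiating under the integral, and the computation of $f(0)$ and $f'(0)$.

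\textbf{Comparison with the paper.} The paper does the same computation in integrated form. It first integrates the $v$-derivative to get an integration-by-parts identity relating $\int vz^{a-1}\rho$, $\int z^{a-1}\rho$ and $\int(1-v^2)z^{a-2}\rho$. It then uses the same two hyperbolic identities to express $\frac{1}{\sinh u}\int vz^{a-1}\rho$ and $\frac{1}{\sinh u}\int z^{a-1}\rho$ through $f$ and $f'$. Finally it substitutes into $f''=a^2f-a(a-1)\int(1-v^2)z^{a-2}\rho\,\mathrm{d}v$. Once actually carried out, your pointwise version is equivalent and slightly tidier.
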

\begin{proof}
Differentiation of the density gives
\begin{equation} \label{rho}
\frac{\mathrm{d}}{\mathrm{d}v}\left[(1-v^2)\rho(v)\right]=(d-cv)\rho(v).
\end{equation}
For fixed $u>0$, the quantity $z(u,v)$ is bounded above and below by positive constants for $-1\leq v\le1$. Moreover,
$(1-v^2)\rho(v)=O((1-v)^b),(v\to1^-),$
and
$(1-v^2)\rho(v)=O((1+v)^{c-b}),(v\to-1^+).$
Since $b>0$ and $c-b>0$,
$(1-v^2)\rho(v)z(u,v)^{a-1}\to0 \quad(v\to\pm1).$
Therefore
\begin{equation*}
\int_{-1}^1 \frac{\mathrm{d}}{\mathrm{d}v} \left[(1-v^2)\rho(v)z(u,v)^{a-1}\right] \mathrm{d}v=0,
\end{equation*}
combining with \eqref{rho}, which yields
\begin{equation} \label{ibp}
c\int_{-1}^1vz^{a-1}\rho(v) \mathrm{d}v
=d\int_{-1}^1z^{a-1}\rho(v) \mathrm{d}v+(a-1)\sinh u\int_{-1}^1(1-v^2)z^{a-2}\rho(v) \mathrm{d}v.
\end{equation}

Next,
\begin{equation} \label{zu}
z_u=\sinh u+v\cosh u,\quad z_{uu}=z,
\end{equation}
and direct expansion gives
$z^2-z_u^2=(1-v^2)(\cosh^2u-\sinh^2u) =1-v^2.$
Thus
\begin{equation*}
z_u^2=z^2-(1-v^2).
\end{equation*}
Recall \eqref{f}, differentiating $f=\int z^a\rho \mathrm{d}v$ twice gives
\begin{equation*}
f'=a\int_{-1}^1z^{a-1}z_u\rho(v) \mathrm{d}v
\end{equation*}
and
\begin{equation*}
f''=a(a-1)\int_{-1}^1z^{a-2}z_u^2\rho(v) \mathrm{d}v +a\int_{-1}^1z^{a-1}z_{uu}\rho(v) \mathrm{d}v.
\end{equation*}
Since $z_{uu}=z$ and $z_u^2=z^2-(1-v^2)$,
\begin{equation} \label{fpp}
f''=a^2f-a(a-1)\int_{-1}^1(1-v^2)z^{a-2}\rho(v) \mathrm{d}v.
\end{equation}

The pointwise identity
\begin{equation}\label{pointwise id1}
z_u\coth u-z
=\frac{\cosh u}{\sinh u}(\sinh u+v\cosh u) -(\cosh u+v\sinh u)
=\frac{v}{\sinh u}
\end{equation}
implies
\begin{equation*}
\frac{1}{\sinh u}\int_{-1}^1vz^{a-1}\rho(v) \mathrm{d}v =\frac{\coth u}{a}f'-f.
\end{equation*}
Similarly, $z\cosh u-z_u\sinh u =\cosh^2u-\sinh^2u=1,$ and hence
\begin{equation}\label{pointwise id2}
\dfrac{1}{\sinh u}=z\coth u-z_u.
\end{equation} 
Therefore
\begin{equation*}
\frac{1}{\sinh u}\int_{-1}^1z^{a-1}\rho(v) \mathrm{d}v =\coth u f-\frac{1}{a}f'.
\end{equation*}

Divide \eqref{ibp} by $\sinh u$. Then
\begin{equation*}
(a-1)\int_{-1}^1(1-v^2)z^{a-2}\rho(v) \mathrm{d}v
= \frac{c}{\sinh u}\int_{-1}^1vz^{a-1}\rho(v) \mathrm{d}v
- \frac{d}{\sinh u}\int_{-1}^1z^{a-1}\rho(v) \mathrm{d}v.
\end{equation*}
Substituting this into \eqref{fpp} gives
\begin{equation*}
\begin{aligned}
f'' &=a^2f -ac\left(\frac{\coth u}{a}f'-f\right) +ad\left(\coth u f-\frac{1}{a} f'\right)\\
&=a^2f-c\coth u f'+ac f+ad\coth u f-df',
\end{aligned}
\end{equation*}
which proves \eqref{ode}.

Finally, $z(0,v)=1$ and $z_u(0,v)=v$, so
\begin{equation*}
f(0)=\int_{-1}^1\rho(v) \mathrm{d}v=1 \quad f'(0)=a\int_{-1}^1v\rho(v) \mathrm{d}v.
\end{equation*}
Recalling $v=1-2t$ and \eqref{moments}, we obtain
\begin{equation*}
f'(0)=a\int_{0}^1 (1-2t) \mathrm{d} \nu(t) = a(1-\frac{2b}{c}) = \frac{ad}{c},
\end{equation*}
which proves \eqref{initial}.
\end{proof}

The preceding lemma identifies the differential operator governing the
transformed hypergeometric function. To compare $H_a$ with the power
means, we now express $A_p$ in the same variable $u$ and construct the
corresponding comparison function.

For $p>0$, put
$v_0=\dfrac{1}{2}\log\dfrac{w}{1-w}=\dfrac{1}{2}\log\dfrac{c-b}{b}.$
Then
\begin{equation} \label{tanh}
\tanh v_0=\frac{c-2b}{c}=\frac{d}{c}.
\end{equation}
From the definition of $v_0$,
\begin{equation*}
w=\frac{\mathrm{e}^{v_0}}{2\cosh v_0},\quad 1-w=\frac{\mathrm{e}^{-v_0}}{2\cosh v_0},
\end{equation*}
and hence $w\mathrm{e}^{pu}+(1-w)\mathrm{e}^{-pu}=\dfrac{\cosh(pu+v_0)}{\cosh v_0}$. 
Now define
\begin{equation} \label{g}
\begin{aligned}
g(u)&=[w\mathrm{e}^{pu}+(1-w)\mathrm{e}^{-pu}]^{\frac{a}{p}}=\left[\frac{\cosh(pu+v_0)}{\cosh v_0}\right]^{\frac{a}{p}}.
\end{aligned}
\end{equation}
Since $1-r=\mathrm{e}^{-2u}$,
\begin{equation*}
A_p(w;1,1-r)=A_p(w;1,\mathrm{e}^{-2u})=\mathrm{e}^{-u}[w\mathrm{e}^{pu}+(1-w)\mathrm{e}^{-pu}]^{\frac{1}{p}}.
\end{equation*}
Together with \eqref{f}, this gives
\begin{equation} \label{ratio}
\left(\frac{f(u)}{g(u)}\right)^{\frac{1}{a}}=\frac{H_a(r)}{A_p(w;1,1-r)}.
\end{equation}
Also $g(0)=1$ and, by \eqref{tanh},
\begin{equation*}
g'(0)=a\tanh v_0=\frac{ad}{c}=f'(0).
\end{equation*}
Hence $f(0)=g(0)$ and $f'(0)=g'(0)$.

Let
\begin{equation*}
\mathcal{L}y=y''+(c\coth u+d)y'-a(a+c+d\coth u)y.
\end{equation*}
Then \eqref{ode} says $\mathcal{L}f=0$.

The following lemma computes the residual of the comparison function $g$.

\begin{lemma} \label{reslem}
Put $k=2p-1$. Then
\begin{equation} \label{res}
\mathcal{L}g=\frac{ag}{\cosh^2(pu+v_0)}S_p(u),
\end{equation}
where
\begin{equation} \label{S}
S_p(u)=p-a+\frac{c}{2}\left(\frac{\sinh(ku)}{\sinh u}-1\right)-\frac{d}{2}\frac{\cosh u-\cosh(ku)}{\sinh u}.
\end{equation}
Moreover,
\begin{equation} \label{S0}
S_p(0^+)=p-a+c(p-1)=(1+c)(p-p_0).
\end{equation}
\end{lemma}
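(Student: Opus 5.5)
The proof is a direct computation, done logarithmically. I write $\theta=pu+v_0$ and work with $g'/g$ and $g''/g$ instead of differentiating the power directly. The only nontrivial input is the relation $\tanh v_0=d/c$ from \eqref{tanh}; it is what removes $v_0$ from the final formula.

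\emph{Step 1: derivatives of $g$.} From \eqref{g}, $\log g=\frac{a}{p}\bigl(\log\cosh\theta-\log\cosh v_0\bigr)$, so $g'=a\tanh\theta\, g$. Differentiating once more and using $\tanh^2\theta=1-\operatorname{sech}^2\theta$ gives
\begin{equation*}
\frac{g''}{g}=a^2\tanh^2\theta+ap\operatorname{sech}^2\theta=a^2+a(p-a)\operatorname{sech}^2\theta .
\end{equation*}
Inserting this into $\mathcal{L}$, the constant $a^2$ cancels against $-a\cdot a$, and I obtain
\begin{equation*}
\frac{\mathcal{L}g}{ag}=(p-a)\operatorname{sech}^2\theta+c\bigl(\coth u\tanh\theta-1\bigr)+d\bigl(\tanh\theta-\coth u\bigr).
\end{equation*}

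\emph{Step 2: multiply by $\cosh^2\theta$ and use double angles.} Write $\sinh\theta\cosh\theta=\frac12\sinh2\theta$ and $\cosh^2\theta=\frac12(\cosh2\theta+1)$, and put everything over $2\sinh u$. The subtraction formulas then give
\begin{equation*}
\coth u\sinh\theta\cosh\theta-\cosh^2\theta=\frac{\sinh(2\theta-u)-\sinh u}{2\sinh u},
\end{equation*}
\begin{equation*}
\sinh\theta\cosh\theta-\coth u\cosh^2\theta=-\frac{\cosh(2\theta-u)+\cosh u}{2\sinh u}.
\end{equation*}
Note that $2\theta-u=ku+2v_0$ with $k=2p-1$. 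Therefore $\cosh^2\theta\,\mathcal{L}g/(ag)$ equals $p-a$ plus
\begin{equation*}
\frac{c\sinh(ku+2v_0)-d\cosh(ku+2v_0)-c\sinh u-d\cosh u}{2\sinh u}.
\end{equation*}

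\emph{Step 3 (the key point): eliminate $v_0$.} I need
\begin{equation*}
c\sinh(ku+2v_0)-d\cosh(ku+2v_0)=c\sinh(ku)+d\cosh(ku).
\end{equation*}
Applying sum-to-product formulas, this is equivalent to
\begin{equation*}
2c\cosh(ku+v_0)\sinh v_0=2d\cosh(ku+v_0)\cosh v_0 ,
\end{equation*}
which is exactly $\tanh v_0=d/c$. After this substitution the expression regroups as
\begin{equation*}
\frac{c}{2}\Bigl(\frac{\sinh ku}{\sinh u}-1\Bigr)-\frac{d}{2}\,\frac{\cosh u-\cosh ku}{\sinh u},
\end{equation*}
which proves \eqref{res} with $S_p$ as in \eqref{S}. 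This is the only step I expect to require care; the rest is bookkeeping of signs.

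\emph{Step 4: the limit \eqref{S0}.} As $u\to0^+$, $\sinh(ku)/\sinh u\to k$ and $(\cosh u-\cosh ku)/\sinh u\to0$. Hence
\begin{equation*}
S_p(0^+)=p-a+\frac{c}{2}(k-1)=p-a+c(p-1).
\end{equation*}
Since $p_0=\frac{a+c}{1+c}$, this equals $(1+c)p-(a+c)=(1+c)(p-p_0)$. The case $k=0$, i.e. $p=\frac12$, is covered by the same formula, since then $\sinh(ku)=0$.
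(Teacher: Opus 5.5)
Your proposal is correct and follows the paper's proof essentially step for step: logarithmic differentiation of $g$, then multiplication by $\cosh^2(pu+v_0)$ with double-angle reductions to the argument $ku+2v_0$, then elimination of $v_0$ using $c\sinh v_0=d\cosh v_0$. The only difference is cosmetic: you verify $c\sinh(ku+2v_0)-d\cosh(ku+2v_0)=c\sinh(ku)+d\cosh(ku)$ with sum-to-product formulas, whereas the paper expands by the addition formula using $c\cosh(2v_0)-d\sinh(2v_0)=c$ and $c\sinh(2v_0)-d\cosh(2v_0)=d$.
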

\begin{proof}
Logarithmic differentiation of \eqref{g} gives
\begin{equation*}
\frac{g'}{g}=a\tanh(pu+v_0).
\end{equation*}
Differentiating once more,
we obtain
\begin{equation*}
\frac{g''}{g} =\left(\frac{g'}{g}\right)^2 +\left(\frac{g'}{g}\right)' =a^2\tanh^2(pu+v_0)+ap\operatorname{sech}^2(pu+v_0).
\end{equation*}
Substitution into the definition of $\mathcal{L}$ yields
\begin{equation*}
\begin{aligned}
\frac{\mathcal{L}g}{ag} &=a\tanh^2(pu+v_0) +p\operatorname{sech}^2(pu+v_0)\\
&+(c\coth u+d)\tanh(pu+v_0) -(a+c+d\coth u).
\end{aligned}
\end{equation*}
Using $\tanh^2x=1-\operatorname{sech}^2x$,
multiplication by $\cosh^2(pu+v_0)$ gives
\begin{equation} \label{rawS}
\begin{aligned}
\frac{\cosh^2(pu+v_0)}{ag}\mathcal{L}g &=p-a+c\cosh^2(pu+v_0)(\coth u\tanh(pu+v_0)-1)\\
&+d\cosh^2(pu+v_0)(\tanh(pu+v_0)-\coth u).
\end{aligned}
\end{equation}

For the first hyperbolic term,
\begin{equation*}
\begin{aligned}
\cosh^2(pu+v_0)&(\coth u\tanh(pu+v_0)-1)\\
&= \frac{\cosh u\sinh(pu+v_0)\cosh(pu+v_0) -\sinh u\cosh^2(pu+v_0)}{\sinh u}\\
&= \frac{\sinh((2p-1)u+2v_0)}{2\sinh u}-\frac{1}{2}.
\end{aligned}
\end{equation*}
Similarly,
\begin{equation*}
\begin{aligned}
\cosh^2(pu+v_0)&(\tanh(pu+v_0)-\coth u)\\
&= \frac{\sinh u\sinh(pu+v_0)\cosh(pu+v_0) -\cosh u\cosh^2(pu+v_0)}{\sinh u}\\
&= -\frac{\cosh((2p-1)u+2v_0)}{2\sinh u} -\frac{1}{2}\coth u.
\end{aligned}
\end{equation*}

Recall \eqref{tanh}:
$ c\sinh v_0=d\cosh v_0.$
Combine this with the double-angle formulas:
\begin{equation*}
c\cosh(2v_0)-d\sinh(2v_0)=c, \quad
c\sinh(2v_0)-d\cosh(2v_0)=d.
\end{equation*}
Hence, with $k=2p-1$,
\begin{equation*}
\begin{aligned}
c\sinh(ku+2v_0)&-d\cosh(ku+2v_0)\\
&= [c\cosh(2v_0)-d\sinh(2v_0)]\sinh(ku)\\
&\quad+ [c\sinh(2v_0)-d\cosh(2v_0)]\cosh(ku)\\
&=c\sinh(ku)+d\cosh(ku).
\end{aligned}
\end{equation*}
Consequently the last two terms in \eqref{rawS} equal
\begin{equation*}
\begin{aligned}
\frac{c\sinh(ku)+d\cosh(ku)}{2\sinh u}& -\frac{c}{2}-\frac{d}{2}\coth u\\
&= \frac{c}{2}\left(\frac{\sinh(ku)}{\sinh u}-1\right) -\frac{d}{2}\frac{\cosh u-\cosh(ku)}{\sinh u}.
\end{aligned}
\end{equation*}
Adding the remaining term $p-a$ proves \eqref{res} and \eqref{S}.

Finally, as $u\to 0^{+}$,
$$
\dfrac{\sinh(ku)}{\sinh u}\to k 
,\qquad
\dfrac{\cosh u-\cosh(ku)}{\sinh u} \to 0,
$$
therefore
\begin{equation*}
S_p(0^+) =p-a+\frac{c}{2}(k-1) =p-a+c(p-1) =(1+c)(p-p_0),
\end{equation*}
which proves \eqref{S0}.
\end{proof}

\begin{lemma} \label{hyp}
Fix $k>-1$. For $u>0$, recalling~\eqref{S}, naturally we define
\begin{equation}\label{FkQk}
F_k(u)=\frac{\sinh(ku)}{\sinh u},
\qquad
Q_k(u)=\frac{\cosh u-\cosh(ku)}{\sinh u}.
\end{equation}
Then the following assertions hold.
\begin{enumerate}
\item The function $F_k$ is strictly increasing on $u\in(0,\infty)$
if $-1<k<0$ or $k>1$, and strictly decreasing on $u\in(0,\infty)$
if $0<k<1$. At the remaining values $k=0$ and $k=1$,
one has $F_0\equiv0$ and $F_1\equiv1$, respectively.

\item If $-1<k<1$, then $Q_k$ is strictly increasing
on $(0,\infty)$.
\end{enumerate}
\end{lemma}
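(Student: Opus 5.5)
The plan is to reduce each monotonicity claim to the sign of an auxiliary numerator that vanishes at $u=0$ and whose derivative factors into a product of elementary terms with fixed signs. No earlier result of the paper is needed beyond the definitions \eqref{FkQk}.

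\emph{Part (1).} For $u>0$ one has $F_k'(u)=N(u)/\sinh^2u$, where
\begin{equation*}
N(u)=k\cosh(ku)\sinh u-\sinh(ku)\cosh u .
\end{equation*}
Clearly $N(0)=0$. Differentiating, the two terms $k\cosh(ku)\cosh u$ cancel, and
\begin{equation*}
N'(u)=(k^2-1)\sinh(ku)\sinh u .
\end{equation*}
I would then check the sign of $N'$ on $(0,\infty)$ case by case.
\begin{itemize}
\item If $k>1$, both $k^2-1$ and $\sinh(ku)$ are positive, so $N'>0$.
\item If $0<k<1$, then $k^2-1<0$ and $\sinh(ku)>0$, so $N'<0$.
\item If $-1<k<0$, then $k^2-1<0$ and $\sinh(ku)<0$, so $N'>0$.
\end{itemize}
Since $N(0)=0$, $N$ inherits the strict sign of $N'$ on $(0,\infty)$, and therefore so does $F_k'$. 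This gives the stated strict monotonicity. The cases $k=0$ and $k=1$ are immediate from the definition, giving $F_0\equiv0$ and $F_1\equiv1$.

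\emph{Part (2).} For $u>0$, using $\sinh^2u-\cosh^2u=-1$, one gets $Q_k'(u)=M(u)/\sinh^2u$, where
\begin{equation*}
M(u)=\cosh(ku)\cosh u-k\sinh(ku)\sinh u-1 .
\end{equation*}
Again $M(0)=0$. In the derivative the terms $k\sinh(ku)\cosh u$ cancel, leaving
\begin{equation*}
M'(u)=(1-k^2)\cosh(ku)\sinh u .
\end{equation*}
For $-1<k<1$ every factor is positive on $(0,\infty)$, so $M'>0$. Hence $M>0$ there, and $Q_k$ is strictly increasing.

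There is no real obstacle. The only point needing care is the bookkeeping in the two derivative computations, where the cancellations are exactly what make $N'$ and $M'$ factor. The sign analysis for $-1<k<0$ in part (1) must also correctly account for $\sinh(ku)<0$.
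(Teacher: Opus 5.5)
Your proof is correct. Part (2) is the same as the paper's argument: the same numerator $\cosh(ku)\cosh u-k\sinh(ku)\sinh u-1$, vanishing at $0$ with derivative $(1-k^2)\cosh(ku)\sinh u$.

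For part (1) you take a slightly different route. The paper writes $F_k'/F_k=k\coth(ku)-\coth u=\bigl[(ku)\coth(ku)-u\coth u\bigr]/u$ and uses the strict monotonicity of $x\mapsto x\coth x$. Dividing by $F_k$ needs $F_k>0$, so this handles only $k>0$ directly, and the range $-1<k<0$ is then obtained from the reflection $F_k=-F_{-k}$.

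Your numerator $N$, with $N'=(k^2-1)\sinh(ku)\sinh u$, treats all three ranges of $k$ in one computation. It needs no positivity or reflection step, and it uses the same vanishing-numerator device as part (2), so the whole lemma rests on one technique.

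The paper's version makes the threshold $k=1$ more transparent: the sign is decided by comparing the arguments $ku$ and $u$ of a single increasing function. The monotonicity of $x\coth x$ is itself proved there by the same vanishing-numerator trick, so neither route is more elementary than the other.
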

This elementary lemma will be used in Sections \ref{sec-3} and \ref{sec-4} to determine the sign and monotonicity of the residual $S_p$. Its proof is deferred to Appendix A.

\subsection{Weighted-Wronskian comparison}
The following weighted-Wronskian criterion converts residual sign
information into a pointwise comparison of two positive functions.

\begin{proposition}[weighted-Wronskian comparison] \label{comp}
Let $P,V\in C((0,\infty);\mathbb{R})$, and define
the second-order linear differential operator
\begin{equation*}
\mathcal{A}y=y''+P(u)y'-V(u)y.
\end{equation*}
Let $m\in C^1((0,\infty))$ be strictly positive and satisfy
$m'=Pm$. Suppose that $\tilde f,\tilde g>0$ are
$C^2$ functions on $(0,\infty)$ such that
\begin{equation*}
\mathcal{A}\tilde f=0,\quad \mathcal{A}\tilde g=\sigma(u)\tilde g.
\end{equation*}
Set
\begin{equation*}
R=\frac{\tilde f}{\tilde g},\quad
W=m(\tilde f'\tilde g-\tilde f\tilde g'),
\end{equation*}
and assume $ R(0^+)=1$ and $ W(0^+)=0$. Then:
\begin{enumerate}
  \item If $\sigma<0$ on $(0,\infty)$, then $ R>1$ and $ R'>0$;
  \item If $\sigma>0$ on $(0,\infty)$, then $ R<1$ and $ R'<0$.
\end{enumerate}

Assume in addition that $ R(\infty)=1$. If there exists $u_0>0$ such that
\begin{equation*}
\sigma(u)<0\quad(0<u<u_0),\quad \sigma(u)>0\quad(u>u_0),
\end{equation*}
then $ R(u)>1$ for every $u>0$. If the two signs are reversed, then $ R(u)<1$ for every $u>0$.
\end{proposition}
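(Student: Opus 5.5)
The plan is a direct computation of $W'$, followed by monotonicity of $R$ read off from the sign of $W$.

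\emph{Step 1: the two identities.} Since $m'=Pm$, we have
\begin{equation*}
W'=m\bigl[(\tilde f''+P\tilde f')\tilde g-\tilde f(\tilde g''+P\tilde g')\bigr].
\end{equation*}
The hypotheses give $\tilde f''+P\tilde f'=V\tilde f$ and $\tilde g''+P\tilde g'=V\tilde g+\sigma\tilde g$. Substituting, the $V$-terms cancel and
\begin{equation*}
W'(u)=-m(u)\sigma(u)\tilde f(u)\tilde g(u).
\end{equation*}
Because $m,\tilde f,\tilde g>0$, the sign of $W'$ is exactly the sign of $-\sigma$. Independently, the quotient rule gives
\begin{equation*}
R'=\frac{\tilde f'\tilde g-\tilde f\tilde g'}{\tilde g^2}=\frac{W}{m\tilde g^2},
\end{equation*}
so the sign of $R'$ equals the sign of $W$.

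\emph{Step 2: parts (1) and (2).} If $\sigma<0$ on $(0,\infty)$, then $W$ is strictly increasing. Together with $W(0^+)=0$ this gives $W>0$ on $(0,\infty)$. Hence $R'>0$, and since $R(0^+)=1$, we get $R(u)>\lim_{s\to0^+}R(s)=1$ for every $u>0$. Part (2) is the same argument with all inequalities reversed.

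\emph{Step 3: the single sign change.} Assume $R(\infty)=1$ and that $\sigma$ changes sign once at $u_0$, from negative to positive.
\begin{itemize}
\item On $(0,u_0]$ the argument of Step 2 gives $W>0$, $R$ strictly increasing, and $R>1$; in particular $R(u_0)>1$ and $W(u_0)>0$.
\item On $(u_0,\infty)$, $W$ is strictly decreasing, so it has at most one zero there.
\item If $W$ had no zero, then $R$ would be increasing on all of $(0,\infty)$. This would give $R(\infty)\ge R(u_0)>1$, contradicting $R(\infty)=1$. Hence there is a unique $u_1>u_0$ with $W(u_1)=0$, $W>0$ on $(0,u_1)$ and $W<0$ on $(u_1,\infty)$.
\end{itemize}
Consequently $R$ increases on $(0,u_1]$, so $R>1$ there. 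It strictly decreases on $[u_1,\infty)$ toward its limit $1$, so $R(u)>1$ there as well. Thus $R>1$ everywhere. The reversed-sign case follows by the symmetric argument, or by replacing $\sigma$ with $-\sigma$ and interchanging the roles of the inequalities.

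\emph{Main obstacle.} The only delicate point is the last step. One must exclude the possibility that $W$ stays positive after $u_0$, and this is exactly where the endpoint condition $R(\infty)=1$ is used. Strict monotonicity of $W$ on each side of $u_0$ then yields the unique crossing $u_1$ and the strict inequality.
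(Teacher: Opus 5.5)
Your proof is correct and follows the paper's argument essentially step for step: the identities $W'=-m\sigma\tilde f\tilde g$ and $R'=W/(m\tilde g^2)$, the sign propagation from $W(0^+)=0$, and, in the single-sign-change case, the use of $R(\infty)=1$ to force a unique zero $u_1>u_0$ of $W$, so that $R$ rises and then falls between equal endpoint values. Your contradiction step ($R(\infty)\ge R(u_0)>1$) is a slightly more explicit version of the paper's.
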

Its proof is deferred to Appendix A.

In the present problem, set $\tilde{f}=f,\tilde{g}=g,m(u)=(\sinh u)^c\mathrm{e}^{du}$,then
\begin{equation*}
 R(u)=\frac{f(u)}{g(u)},\quad W(u)=m(u)[f'(u)g(u)-f(u)g'(u)].
\end{equation*}
Since $\dfrac{m'}{m}=c\coth u+d$, Proposition \ref{comp}, \eqref{ode} and \eqref{res} give
\begin{equation} \label{Wprime}
W'=-\frac{amfg}{\cosh^2(pu+v_0)}S_p(u),
\end{equation}
and
\begin{equation} \label{Rprime}
R'(u)=\frac{W(u)}{m(u)g(u)^2}.
\end{equation}
The initial data imply $W(0^+)=0$ because $f(0)=g(0)$, $f'(0)=g'(0)$ and $m(u)\sim u^c$ as $u\to0^+$.

\begin{corollary} \label{direct}
Assume $a\neq0$ and $p>0$.
\begin{enumerate}
\item If $S_p(u)<0$ for every $u>0$, then
\begin{equation*}
\frac{H_a(r)}{A_p(w;1,1-r)}>1
\end{equation*}
for every $r\in(0,1)$, and this quotient is strictly increasing in $r$. If $S_p(u)>0$ for every $u>0$, then the quotient is strictly smaller than $1$ and strictly decreasing.

\item Suppose, in addition, that $p=p_E$. Then
\begin{equation} \label{equalends}
\lim_{u\to0^+}\frac{f(u)}{g(u)}=1,\quad
\lim_{u\to\infty}\frac{f(u)}{g(u)}=1.
\end{equation}
If $aS_{p_E}$ is strictly increasing, then $f/g>1$ on $(0,\infty)$; if $aS_{p_E}$ is strictly decreasing, then $f/g<1$ on $(0,\infty)$. If $S_{p_E}$ is constant, then $S_{p_E}\equiv0$ and $f/g\equiv1$.
\end{enumerate}
\end{corollary}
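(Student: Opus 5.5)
The plan is to apply Proposition \ref{comp} with $\tilde f=f$, $\tilde g=g$, $m(u)=(\sinh u)^c\mathrm{e}^{du}$, and $\sigma(u)=\frac{a}{\cosh^2(pu+v_0)}S_p(u)$, which is allowed by \eqref{res}. Both $f$ and $g$ are positive on $(0,\infty)$: $f$ is the integral of a positive function, and $g$ is a positive power of $\cosh(pu+v_0)/\cosh v_0$. The normalizations $R(0^+)=1$ and $W(0^+)=0$ come from \eqref{initial}, $g(0)=1$, $g'(0)=ad/c$, and $m(u)\sim u^c$.

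\emph{Part (1).} The sign of $\sigma$ equals the sign of $aS_p$. Suppose $a>0$ and $S_p<0$. Then $\sigma<0$, so Proposition \ref{comp} gives $R>1$ and $R'>0$. Since $1/a>0$, $R^{1/a}>1$ is increasing in $u$. Because $r=1-\mathrm{e}^{-2u}$ is increasing in $u$, \eqref{ratio} shows that $H_a/A_p>1$ and is strictly increasing in $r$. If $a<0$ and $S_p<0$, then $\sigma>0$, so $R<1$ and $R'<0$. The exponent $1/a<0$ reverses both facts, and again $R^{1/a}>1$ is increasing. The case $S_p>0$ is symmetric.

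\emph{Part (2): endpoint limits.} For $p=p_E>0$ (positive by Corollary \ref{endpointsharp}), the limit at $0^+$ is immediate. For $u\to\infty$, use \eqref{ratio} together with Lemma \ref{endpoint}: as $r\to1^-$, $H_a(r)\to w^{1/p_E}$ and $A_{p_E}(w;1,1-r)\to w^{1/p_E}$. Hence $(f/g)^{1/a}\to1$, and so $f/g\to1$.

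\emph{Part (2): sign of $\sigma$.} At the origin, $S_{p_E}(0^+)=(1+c)(p_E-p_0)$. The key claim is that strict monotonicity of $aS_{p_E}$ forces exactly one sign change of $\sigma$, from negative to positive when $aS_{p_E}$ is increasing. Then the last part of Proposition \ref{comp} gives $R>1$. I would argue by contradiction, excluding a constant sign. If $aS_{p_E}$ is increasing and $\sigma>0$ throughout (or $\sigma\ge0$ with $\sigma\not\equiv0$), then $W$ is strictly decreasing from $W(0^+)=0$. So $W<0$ and $R'<0$, giving $R(\infty)<1$, which contradicts $R(\infty)=1$. If instead $\sigma\le0$ throughout, then $aS_{p_E}\le0$ everywhere while strictly increasing, hence $<0$ on $(0,\infty)$. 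The same argument gives $R$ strictly increasing, so $R(\infty)>1$, again a contradiction. Therefore $aS_{p_E}$ is negative near $0$ and positive for large $u$. Monotonicity makes the crossing a single point $u_0$, and Proposition \ref{comp} applies. The decreasing case is identical with the signs reversed.

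\emph{Constant case.} If $S_{p_E}\equiv s$ and $s\ne0$, part (1) gives $R$ strictly monotone, contradicting $R(0^+)=R(\infty)=1$. Hence $s=0$, so $\sigma\equiv0$, $W$ is constant equal to $W(0^+)=0$, $R'\equiv0$, and $R\equiv1$.

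The main obstacle is the sign-change step: one must rule out that $aS_{p_E}$ keeps a constant sign, and that is exactly where the endpoint matching $R(\infty)=1$ is used. A smaller technical point is justifying the limit $W(0^+)=0$. This needs $f'g-fg'=O(u)$, which follows from $C^2$ regularity up to $u=0$; that regularity holds because $z^a$ has dominated derivatives on $[0,U]$. Combined with $m(u)=O(u^c)$ and $c>0$, this gives $W\to0$.
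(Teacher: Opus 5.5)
Your proposal is correct and follows essentially the same route as the paper. You apply Proposition~\ref{comp} with $\sigma=aS_p/\cosh^2(pu+v_0)$ and pass the sign information through the exponent $1/a$ and \eqref{ratio} for part (1); you get $R(\infty)=1$ from Lemma~\ref{endpoint}; and you use the matched endpoint values to exclude a one-signed $aS_{p_E}$ and a nonzero constant before invoking the sign-change clause. The differences are cosmetic: you split on the sign of $a$ instead of differentiating $R^{1/a}$, and you compute the limit at infinity via $H_a/A_{p_E}$ rather than via $\mathrm{e}^{-au}f$ and $\mathrm{e}^{-au}g$.
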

\begin{proof}
By \eqref{res}, the residual in Proposition \ref{comp} is
\begin{equation*}
\sigma_p(u)=\frac{aS_p(u)}{\cosh^2(pu+v_0)}.
\end{equation*}
For part (1), if $S_p<0$, Proposition \ref{comp} gives $\operatorname{sgn}R'=\operatorname{sgn}a$. Hence
\begin{equation*}
\frac{\mathrm{d}}{\mathrm{d}u}R(u)^{\frac{1}{a}}=\frac{1}{a}R(u)^{\frac{1}{a}-1}R'(u)>0.
\end{equation*}
Since $R(0^+)=1$, \eqref{ratio} gives $H_a/A_p>1$. If $S_p>0$, the same argument gives $\operatorname{sgn}R'=-\operatorname{sgn}a$ and therefore $H_a/A_p<1$. Finally, $\frac{\mathrm{d}r}{\mathrm{d}u}=2\mathrm{e}^{-2u}>0$, so the monotonicity in $u$ and $r$ is the same.

For part (2), the first limit in \eqref{equalends} follows from $f(0)=g(0)=1$. By \eqref{Hend},
\begin{equation*}
\mathrm{e}^{-au}f(u)=H_a(1-\mathrm{e}^{-2u})^a\to C_a,
\end{equation*}
whereas
\begin{equation*}
\mathrm{e}^{-au}g(u)=[w+(1-w)\mathrm{e}^{-2p_Eu}]^{\frac{a}{p_E}}\to w^{\frac{a}{p_E}}=C_a.
\end{equation*}
Thus $R(\infty)=1$.

Assume that $aS_{p_E}$ is strictly increasing. It cannot be nonnegative on all of $(0,\infty)$. 
Otherwise, \eqref{Wprime} and \eqref{Rprime} would make $R$ nonincreasing with equal endpoint values, forcing $R\equiv1$ and hence $aS_{p_E}\equiv0$, a contradiction. The case $aS_{p_E}\leq0$ is identical. Hence $aS_{p_E}$ takes both signs, and strict increase gives a unique $u_0$ at which its sign changes from negative to positive. Since $\cosh^2(p_Eu+v_0)>0$, the residual $\sigma_{p_E}=aS_{p_E}/\cosh^2(p_Eu+v_0)$ has the same sign pattern. Proposition \ref{comp} gives $R>1$.

If $aS_{p_E}$ is strictly decreasing, the same argument and Proposition \ref{comp} yields $R<1$. Finally, if $S_{p_E}$ is constant and nonzero, \eqref{Wprime} would make $R$ strictly monotone, contradicting \eqref{equalends}. Hence $S_{p_E}\equiv0$, and \eqref{Wprime}--\eqref{Rprime} give $R\equiv1$.
\end{proof}

\subsection{The logarithmic analogue}

The preceding comparison is multiplicative and applies when $a\neq0$. To complete the sufficiency mechanism for $a=0$, we construct its additive logarithmic analogue. Put
\begin{equation*}
\ell(u)=\int_{-1}^1\log z(u,v)\rho(v) \mathrm{d}v.
\end{equation*}
Fix $U>0$. On $0\leq u\leq U$ we have $z\geq \mathrm{e}^{-U}$ and $|z_u|+|z_{uu}|\le2\mathrm{e}^U$. Hence
\begin{equation*}
\left|\frac{z_u}{z}\right|\le2\mathrm{e}^{2U},\quad \left|\frac{z_{uu}}{z}-\frac{z_u^2}{z^2}\right|\le2\mathrm{e}^{2U}+4\mathrm{e}^{4U}.
\end{equation*}
These bounds are uniform in $v\in(-1,1)$, so $\ell$ may be differentiated twice on compact $u$-intervals. Since $z_{uu}=z$ and \eqref{zu} holds,
\begin{equation*}
\ell''=\int_{-1}^1\left(\frac{z_{uu}}{z}-\frac{z_u^2}{z^2}\right)\rho(v) \mathrm{d}v =\int_{-1}^1\frac{1-v^2}{z^2}\rho(v) \mathrm{d}v.
\end{equation*}
Note that \eqref{ibp} is valid for every real exponent in $z^{a-1}$. Setting $a=0$ therefore gives
\begin{equation*}
c\int_{-1}^1\frac{v}{z}\rho(v) \mathrm{d}v =d\int_{-1}^1\frac{1}{z}\rho(v) \mathrm{d}v -\sinh u\int_{-1}^1\frac{1-v^2}{z^2}\rho(v) \mathrm{d}v.
\end{equation*}
The two pointwise identities \eqref{pointwise id1}, \eqref{pointwise id2} give
\begin{equation*}
\frac{1}{\sinh u}\int_{-1}^1\frac{v}{z}\rho(v) \mathrm{d}v=\ell'\coth u-1,
\end{equation*}
and
\begin{equation*}
\frac{1}{\sinh u}\int_{-1}^1\frac{1}{z}\rho(v) \mathrm{d}v=\coth u-\ell'.
\end{equation*}
Substitution into the preceding identity yields
\begin{equation} \label{ellode}
\ell''+(c\coth u+d)\ell'=c+d\coth u.
\end{equation}
For $a=0$, let $S_p$ denote the expression in \eqref{S}. For $p>0$, differentiation gives
\begin{equation*}
\frac{\mathrm{d}}{\mathrm{d}u} \left[ \frac{1}{p}\log\frac{\cosh(pu+v_0)}{\cosh v_0} \right] =\tanh(pu+v_0)
\end{equation*}
and
\begin{equation*}
\frac{\mathrm{d}^2}{\mathrm{d}u^2} \left[ \frac{1}{p}\log\frac{\cosh(pu+v_0)}{\cosh v_0} \right] =p\operatorname{sech}^2(pu+v_0).
\end{equation*}
The residual obtained by substituting this comparison function
into \eqref{ellode} is
\begin{equation*}
p\operatorname{sech}^2(pu+v_0) +(c\coth u+d)\tanh(pu+v_0) -(c+d\coth u).
\end{equation*}
Multiplying by $\cosh^2(pu+v_0)$ and using the two hyperbolic identities established in the proof of Lemma \ref{reslem}, one obtains
\begin{equation*}
\begin{aligned}
\cosh^2(pu+v_0)& \left[ p\operatorname{sech}^2(pu+v_0) +(c\coth u+d)\tanh(pu+v_0) -(c+d\coth u) \right]\\
&= p+\frac{c}{2}\left(\frac{\sinh((2p-1)u)}{\sinh u}-1\right) -\frac{d}{2}\frac{\cosh u-\cosh((2p-1)u)}{\sinh u}.
\end{aligned}
\end{equation*}
The right-hand side is exactly $S_p(u)$ with $a=0$. Hence
\begin{equation} \label{hres}
p\operatorname{sech}^2(pu+v_0) +(c\coth u+d)\tanh(pu+v_0)-(c+d\coth u)
=\frac{S_p(u)}{\cosh^2(pu+v_0)}.
\end{equation}
Also,
\begin{equation*}
\ell(u)=u+\log H_0(1-\mathrm{e}^{-2u}),
\end{equation*}
and
\begin{equation*}
	\frac{1}{p}\log\frac{\cosh(pu+v_0)}{\cosh v_0} =u+\log A_p(w;1,\mathrm{e}^{-2u}).
\end{equation*} 
For $p>0$, define the logarithmic comparison function
\begin{equation}\label{Dquot}
D_p(u):=
\ell(u)-\frac{1}{p}\log\frac{\cosh(pu+v_0)}{\cosh v_0}
=\log\frac{H_0(r)}{A_p(w;1,1-r)}.
\end{equation}

Thus the sign of $D_p$ determines the comparison between
$H_0$ and $A_p$.
Subtracting \eqref{hres} from \eqref{ellode} and multiplying by $m$ gives
\begin{equation} \label{JD}
\left(mD_p'\right)'=-\frac{mS_p(u)}{\cosh^2(pu+v_0)}.
\end{equation}
At $u=0$,
\begin{equation*}
\ell(0)=0,\quad \ell'(0)=\int_{-1}^1v\rho(v) \mathrm{d}v=\frac{d}{c},\quad \tanh v_0=\frac{d}{c}.
\end{equation*}
Hence $D_p(0^+)=D_p'(0^+)=0$. Since $m(u)\sim u^c$ and $c>0$, also
\begin{equation*}
m(u)D_p'(u)\to0\quad(u\to0^+).
\end{equation*}

The following lemma is the logarithmic analogue of Corollary \ref{direct}.

\begin{lemma} \label{logturning}
Assume $p=p_E>0$. Then $D_p(0^+)=D_p(\infty)=0$.
\begin{enumerate}
\item If $S_p$ is strictly increasing, then $D_p>0$ on $(0,\infty)$.
\item If $S_p$ is strictly decreasing, then $D_p<0$ on $(0,\infty)$.
\item If $S_p$ is constant, then $S_p\equiv0$ and $D_p\equiv0$.
\end{enumerate}
\end{lemma}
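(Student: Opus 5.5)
The plan mirrors the proof of Corollary \ref{direct}(2), with the additive identity \eqref{JD} in place of the weighted-Wronskian identity \eqref{Wprime}. First I will verify the endpoint values. $D_p(0^+)=0$ has already been recorded. For $u\to\infty$, formula \eqref{Dquot} gives $D_p(u)=\log H_0(r)-\log A_p(w;1,1-r)$ with $r=1-\mathrm{e}^{-2u}\to1^-$. By Lemma \ref{endpoint} this tends to $\log w^{1/p_E}-\log w^{1/p}$, which vanishes when $p=p_E$. Both limits exist since $p>0$.

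Next I will integrate \eqref{JD}. Because $m(u)D_p'(u)\to0$ as $u\to0^+$, we get
\begin{equation*}
m(u)D_p'(u)=-\int_0^u\frac{m(s)S_p(s)}{\cosh^2(ps+v_0)}\,\mathrm{d}s .
\end{equation*}
This gives the additive analogue of Proposition \ref{comp}. If $S_p<0$ on $(0,u_0)$ and $S_p>0$ on $(u_0,\infty)$, then $mD_p'$ is strictly increasing on $(0,u_0)$ from $0$, hence positive there, and strictly decreasing afterwards. So $mD_p'$ has at most one zero $u_1>u_0$, with $D_p'>0$ before $u_1$ and $D_p'<0$ after it. If there is no zero, $D_p$ is strictly increasing from $0$, which contradicts $D_p(\infty)=0$. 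Hence $D_p$ rises from $0$ and then decreases strictly toward $0$. It is therefore positive on $(0,\infty)$: on $(0,u_1]$ it exceeds $D_p(0^+)=0$, and on $(u_1,\infty)$ it exceeds its limit $0$ because it is strictly decreasing. The reversed sign pattern gives $D_p<0$ in the same way.

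To get that sign pattern in case (1), I argue as in Corollary \ref{direct}. Suppose $S_p$ is strictly increasing. If $S_p\ge0$ everywhere, then $mD_p'\le0$, so $D_p$ is nonincreasing with equal endpoint values, forcing $D_p\equiv0$. Then \eqref{JD} gives $S_p\equiv0$, which contradicts strict monotonicity. Symmetrically, $S_p\le0$ everywhere is impossible. So $S_p$ changes sign, and by strict increase it does so exactly once, from negative to positive, which is the pattern above and yields $D_p>0$. Case (2) follows in the same way with signs reversed. For case (3), if $S_p\equiv s\ne0$, then $mD_p'$ is strictly monotone starting from $0$, so $D_p$ is strictly monotone, contradicting $D_p(0^+)=D_p(\infty)=0$. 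Hence $s=0$, so $mD_p'\equiv0$ and $D_p\equiv0$.

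The main point needing care is the sign bookkeeping in the additive setting. Here no factor $a$ appears, and $D_p$ is itself the logarithm of the quotient, so no $1/a$-power step is needed. The only other thing to check is that $D_p$ is continuous at the endpoints, which the limits above provide.
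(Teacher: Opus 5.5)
Your proposal is correct and follows the paper's proof essentially step for step. You obtain the endpoint values from Lemma \ref{endpoint}, integrate \eqref{JD} for $J=mD_p'$ with $J(0^+)=0$, rule out a one-signed $S_p$ via the equal endpoint values, and use the unique zero of $J$ to get the rise-then-fall shape of $D_p$; the constant case is handled the same way as in the paper.
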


\begin{proof}
By \eqref{Hend} and \eqref{Dquot},
$D_p(0^+)=D_p(\infty)=0$.
Set $J=mD_p'$ then \eqref{JD} gives
\begin{equation*}
J'=-\frac{m}{\cosh^2(pu+v_0)}S_p 
\end{equation*}
with  $J(0^+)=0.$
Suppose first that $S_p$ is strictly increasing.
If $S_p\geq0$ throughout $(0,\infty)$, then $J\leq0$
and hence $D_p'\leq0$.
Similarly, if $S_p\leq0$ throughout $(0,\infty)$,
then $D_p'\geq0$.
In either case, the equal endpoint values force $D_p\equiv0$,
and consequently $S_p\equiv0$, a contradiction.
Thus there is a unique $u_0>0$ such that
\begin{equation*}
S_p(u)<0\quad(0<u<u_0),\qquad
S_p(u)>0\quad(u>u_0).
\end{equation*}

It follows that $J$ is strictly increasing on $(0,u_0)$
and strictly decreasing on $(u_0,\infty)$.
Since $J(0^+)=0$, we have $J>0$ on $(0,u_0]$.
$J$ has a unique zero $u_1>u_0$,
otherwise $J>0$ throughout $(0,\infty)$, and hence $D_p'>0$ there.
This would contradict $D_p(0^+)=D_p(\infty)=0$.
Therefore
\begin{equation*}
D_p'(u)>0\quad(0<u<u_1),\qquad
D_p'(u)<0\quad(u>u_1).
\end{equation*}
Together with $D_p(0^+)=D_p(\infty)=0$,
this proves $D_p(u)>0$ for every $u>0$.

If $S_p$ is strictly decreasing, apply the preceding argument
to $-D_p$, $-J$ and $-S_p$ to obtain $D_p<0$.

Finally, suppose that $S_p$ is constant.
If it were nonzero, then $D_p$ strictly monotone,
contradicting its equal endpoint values.
Hence $S_p\equiv0$, which gives $J\equiv0$ and $D_p\equiv0$.
\end{proof}

\section{The range \texorpdfstring{$a\leq1$}{$a\leq1$}} \label{sec-3}
Corollaries \ref{localsharp} and \ref{endpointsharp} already give the necessary restrictions on the admissible lower and upper orders. It remains to establish the sharp comparisons at $p_0$ and $p_E$, their order relations, and the exceptional identity; Lemma \ref{meanmono} then yields the sufficiency of the resulting parameter restrictions. We separate the nonzero exponent from the logarithmic case because the former is multiplicative, while the latter is governed by the additive comparison in Section \ref{sec-2}.

\begin{proposition} \label{prop-nonzero}
Assume $a<1$, $a\neq 0$, $c>b>0$ and $c>b-a$.
\begin{enumerate}
\item If $c\geq \max(1-2a,2b)$ holds, then $0<p_0\leq p_E<1$. If $c=2b=1-2a$, then $p_0=p_E=\frac{1}{2}$ and \eqref{identity} holds. Otherwise $p_0<p_E$ and \eqref{upsharp} holds.
\item If $c\le\min(1-2a,2b)$ holds, then $0<p_E\leq p_0<1$. The equality case is again exactly $c=2b=1-2a$. Outside this case, \eqref{lowsharp} holds.
\end{enumerate}
\end{proposition}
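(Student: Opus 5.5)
The plan is to feed the sign and monotonicity of the residual $S_p$ from \eqref{S} into Corollary \ref{direct}, first at $p=p_0$ and then at $p=p_E$. Put $k=2p-1$ and write $S_p=p-a+\frac c2(F_k-1)-\frac d2 Q_k$ with $F_k,Q_k$ as in \eqref{FkQk} and $d=c-2b$. At $p=p_0$ we have
\begin{equation*}
k_0=2p_0-1=\frac{2a+c-1}{1+c},
\end{equation*}
and \eqref{S0} gives $S_{p_0}(0^+)=0$. Also $p_0>0$ in both branches, since $c>b-a>-a$.

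\emph{Branch (1): $c\ge\max(1-2a,2b)$.} Here $2a+c-1\ge0$ and $a<1$, so $k_0\in[0,1)$, and $d\ge0$. By Lemma \ref{hyp}, $F_{k_0}$ is strictly decreasing if $k_0>0$ and constant if $k_0=0$. The function $Q_{k_0}$ is strictly increasing. Hence $S_{p_0}$ is nonincreasing with $S_{p_0}(0^+)=0$, and it is strictly decreasing unless $k_0=0$ and $d=0$. These two conditions mean exactly $c=1-2a$ and $c=2b$.

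In that exceptional case $S_{p_0}\equiv0$. Then \eqref{Wprime} and \eqref{Rprime} give $f\equiv g$, which is \eqref{identity} with $p_0=\frac12$. Letting $r\to1$ and using Lemma \ref{endpoint} gives $w^{1/p_E}=w^{1/2}$, so $p_E=\frac12$.

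Otherwise $S_{p_0}<0$ on $(0,\infty)$. Corollary \ref{direct}(1) then gives $H_a>A_{p_0}$. Corollary \ref{endpointsharp}(1) gives $p_0\le p_E$, and \eqref{momentless} gives $w^{1/p_E}<w$, hence $p_E<1$.

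Next I would rule out $p_E=p_0$ outside the identity case. If they were equal, $S_{p_E}$ would have a fixed strict sign. By \eqref{Wprime}--\eqref{Rprime} the ratio $f/g$ would then be strictly monotone, contradicting \eqref{equalends}. So $p_0<p_E<1$, and therefore $k_E=2p_E-1\in(k_0,1)\subset(0,1)$. Lemma \ref{hyp} then makes $F_{k_E}$ strictly decreasing and $-\frac d2Q_{k_E}$ nonincreasing, so $S_{p_E}$ is strictly decreasing.

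Now apply Corollary \ref{direct}(2). If $a>0$, then $aS_{p_E}$ is strictly decreasing, so $f/g<1$, and taking the positive power $1/a$ gives $H_a<A_{p_E}$. If $a<0$, then $aS_{p_E}$ is strictly increasing, so $f/g>1$, and the negative power $1/a$ again gives $H_a<A_{p_E}$. Together with $H_a>A_{p_0}$ this proves \eqref{upsharp}.

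\emph{Branch (2): $c\le\min(1-2a,2b)$.} The argument is symmetric. Now $2a+c-1\le0$ and $c+a>0$, which gives $k_0\in(-1,0]$, and $d\le0$. By Lemma \ref{hyp}, $F_{k_0}$ is strictly increasing (or constant if $k_0=0$) and $-\frac d2Q_{k_0}$ is nondecreasing. So $S_{p_0}>0$ except in the identity case $c=2b=1-2a$.

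Corollary \ref{direct}(1) then gives $H_a<A_{p_0}$. Corollary \ref{endpointsharp}(2) gives $p_E\le p_0$, and $p_E>0$ is already known. The same contradiction argument as in branch (1) gives strictness $p_E<p_0$ outside the identity case. Conversely, if $p_E=p_0$ then the argument forces $S_{p_0}\equiv0$, which is exactly the identity case; this shows the equality case is exactly $c=2b=1-2a$.

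Hence $k_E\in(-1,k_0)\subset(-1,0)$. By Lemma \ref{hyp}, $S_{p_E}$ is strictly increasing. Corollary \ref{direct}(2), with the same bookkeeping on the sign of $a$ and of the power $1/a$, yields $H_a>A_{p_E}$. This proves \eqref{lowsharp}.

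The step I expect to need the most care is placing $k_E$ inside the monotonicity range of Lemma \ref{hyp}. This is handled by bootstrapping: the $p_0$-inequality is proved first, and Corollary \ref{endpointsharp} then yields the ordering of $p_0$ and $p_E$. Without that ordering we would have no a priori location for $p_E$ relative to $p_0$ and $1$. The remaining care is in tracking how the sign of $a$ interacts with taking the power $1/a$.
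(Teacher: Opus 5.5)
Your proposal is correct and follows the paper's proof essentially step by step. You get the sign of $S_{p_0}$ from Lemma \ref{hyp}, the $p_0$-bound from Corollary \ref{direct}(1), the ordering of $p_0$ and $p_E$ from the endpoint limit, and then the monotonicity of $S_{p_E}$ fed into Corollary \ref{direct}(2) with the sign of $a$ tracked. The only difference is cosmetic: you obtain $p_0\le p_E$ (resp.\ $p_E\le p_0$) from Corollary \ref{endpointsharp} and exclude equality separately via \eqref{equalends}, whereas the paper reads the strict inequality directly off the strict monotonicity of $H_a/A_{p_0}$; you also justify $p_E=\frac12$ in the identity case explicitly, which the paper only asserts.
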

\begin{proof}
By \eqref{momentless},
\begin{equation*}
0<C_a^{\frac{1}{a}}<w<1.
\end{equation*}
Since $C_a^{\frac{1}{a}}=w^{\frac{1}{p_E}}$ and $0<w<1$, we obtain $0<p_E<1$. Moreover, $c>b-a$ gives $p_0>0$.

\textbf{Step 1: Upper branch}
\quad

Assume $c\geq \max(1-2a,2b)$. Then $d=c-2b\geq0$, 
and $c\geq 1-2a$ is equivalent to $p_0\geq \frac12$.
Since $a<1$, one also has $p_0<1$. In conclusion, if we put $k_0=2p_0-1$, then $0\leq k_0<1$. By \eqref{S0}, $S_{p_0}(0^+)=0$. Recalling \eqref{FkQk}, we have $F_{k_0}(0^+)=k_0$ and $Q_{k_0}(0^+)=0$, and formula \eqref{S} gives
\begin{equation} \label{Sdiff}
S_{p_0}(u)=\frac{c}{2}[F_{k_0}(u)-k_0]-\frac{d}{2}Q_{k_0}(u).
\end{equation}
If $0<k_0<1$, Lemma \ref{hyp} gives $F_{k_0}(u)<k_0$ and $Q_{k_0}(u)>0$. 
Thus $S_{p_0}(u)<0$ for all $u>0$;
If $k_0=0$ and $d>0$, then $F_0\equiv0$ and $Q_0(u)>0$, so the same conclusion holds. 
Thus $S_{p_0}$ can vanish identically only if
\begin{equation*}
k_0=0,\quad d=0.
\end{equation*}
The first equality is equivalent to $c=1-2a$, while the second is equivalent to $c=2b$. 
Hence the exceptional case is
\begin{equation} \label{exception}
c=2b=1-2a.
\end{equation}

\textbf{Case 1:} Outside \eqref{exception}, Corollary \ref{direct} gives
\begin{equation} \label{p0lower}
A_{p_0}(w;1,1-r)<H_a(r),\quad 0<r<1,
\end{equation}
and $H_a/A_{p_0}$ is strictly increasing. Therefore, by lemma \ref{endpoint}, for any $r_*\in(0,1)$,
\begin{equation*}
1<\frac{H_a(r_*)}{A_{p_0}(w;1,1-r_*)}\leq\lim_{r\to1^-}\frac{H_a(r)}{A_{p_0}(w;1,1-r)}=\frac{w^{\frac{1}{p_E}}}{w^{\frac{1}{p_0}}}.
\end{equation*}
Since $p\mapsto w^{\frac{1}{p}}$ is strictly increasing on $(0,\infty)$,
\begin{equation} \label{orderup}
p_0<p_E<1.
\end{equation}
By \eqref{orderup}, $0<k_E:=2p_E-1<1$, and Lemma \ref{hyp} together with \eqref{S} gives
\begin{equation*}
F_{k_E}'<0,\quad Q_{k_E}'>0,\quad S_{p_E}'<0.
\end{equation*}
If $a>0$, then $aS_{p_E}$ is strictly decreasing, so Corollary \ref{direct} gives $f/g<1$. Since $\frac{1}{a}>0$, \eqref{ratio} yields $H_a/A_{p_E}<1$. If $a<0$, similar argument again gives $H_a/A_{p_E}<1$. Thus
\begin{equation} \label{pEupper}
H_a(r)<A_{p_E}(w;1,1-r),\quad 0<r<1.
\end{equation}
Combining \eqref{p0lower} and \eqref{pEupper} proves \eqref{upsharp}.

\textbf{Case 2:} In the exceptional case \eqref{exception}, $p_0=\frac{1}{2}$, $d=0$ and $k_0=0$. Hence $S_{p_0}\equiv0$. Equations \eqref{Wprime} and \eqref{Rprime} give $W\equiv0$ and $f/g\equiv1$, so \eqref{identity} holds and $p_E=p_0=\frac{1}{2}$.

\medskip
\noindent\textbf{Step 2: Lower branch.}

Assume $c\le\min(1-2a,2b)$. Then $d\leq0$ and
$0<p_0\leq\frac12$, so $-1<k_0=2p_0-1\leq0$.
By Lemma \ref{hyp} and \eqref{Sdiff}, we have
$S_{p_0}(u)>0$ for every $u>0$, unless $k_0=d=0$.
The latter is precisely the exceptional case \eqref{exception},
which has already been treated in Step 1.

Outside this exceptional case, Corollary \ref{direct} gives
\begin{equation} \label{p0upper}
H_a(r)<A_{p_0}(w;1,1-r),\qquad 0<r<1,
\end{equation}
and the quotient $H_a/A_{p_0}$ is strictly decreasing.
Its limit as $r\to1^-$ is therefore strictly less than $1$.
By Lemma \ref{endpoint} and the strict increase of
$p\mapsto w^{1/p}$ on $(0,\infty)$, it follows that
$0<p_E<p_0\leq\frac12$.

Consequently, $-1<k_E=2p_E-1<0$, and Lemma \ref{hyp}
together with \eqref{S} shows that $S_{p_E}$ is strictly increasing.
At $p=p_E$, Corollary \ref{direct} yields $f/g>1$ if $a>0$
and $f/g<1$ if $a<0$.
In both cases, \eqref{ratio} gives
\begin{equation} \label{pElower}
A_{p_E}(w;1,1-r)<H_a(r),\qquad 0<r<1.
\end{equation}
Combining \eqref{p0upper} and \eqref{pElower}
proves \eqref{lowsharp}.
\end{proof}

\begin{remark}\label{rem-a1}
For $a=1$, Euler's integral gives
\begin{equation*}
H_1(r)=1-\frac{b}{c}r=A_1(w;1,1-r).
\end{equation*}
Thus $p_0=p_E=1$. By lemma \ref{meanmono}, for every $0<r<1$,
\begin{equation*}
A_\lambda(w;1,1-r)\leq A_1(w;1,1-r) 
\end{equation*}
is equivalent to $\lambda\le1,$
and
\begin{equation*}
A_1(w;1,1-r)\leq A_\mu(w;1,1-r) 
\end{equation*}
is equivalent to $\mu\geq 1.$
\end{remark}

\begin{proposition} \label{prop-log}
Assume $a=0$ and $c>b>0$.
\begin{enumerate}
\item If $c\geq \max(1-2a,2b)$ holds, then $0<p_0\leq p_E<1$. If $c=2b=1$, then $p_0=p_E=\frac{1}{2}$ and \eqref{identity} holds. Otherwise $p_0<p_E$ and \eqref{upsharp} holds.
\item If $c\le\min(1-2a,2b)$ holds, then $0<p_E\leq p_0<1$. The equality case is again exactly $c=2b=1$. Outside this case, \eqref{lowsharp} holds.
\end{enumerate}
\end{proposition}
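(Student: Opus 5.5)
The plan is to repeat the proof of Proposition \ref{prop-nonzero}, replacing the multiplicative comparison (Proposition \ref{comp} and Corollary \ref{direct}) by the additive one: identity \eqref{JD} for $J=mD_p'$ and Lemma \ref{logturning}. With $a=0$ we have $p_0=\frac{c}{1+c}\in(0,1)$. By \eqref{geomless}, $0<\exp(L_0)<w<1$. Since $\exp(L_0)=w^{1/p_E}$ by \eqref{Hend}, it follows that $0<p_E<1$. The residual $S_p$ with $a=0$ is still given by \eqref{S}, and \eqref{S0} reads $S_p(0^+)=(1+c)(p-p_0)$. Therefore \eqref{Sdiff} holds verbatim with $k_0=2p_0-1=\frac{c-1}{c+1}$.

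\textbf{Upper branch} ($c\ge\max(1,2b)$). Here $d\ge0$ and $0\le k_0<1$. As in Step 1 of Proposition \ref{prop-nonzero}, Lemma \ref{hyp} applied to \eqref{Sdiff} gives $S_{p_0}<0$ on $(0,\infty)$, unless $k_0=d=0$, that is, $c=2b=1$.

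Outside this case, \eqref{JD} with $p=p_0$ and $J(0^+)=0$ shows that $J=mD_{p_0}'$ is strictly increasing from $0$. Hence $D_{p_0}'>0$. Since $D_{p_0}(0^+)=0$, this gives $D_{p_0}>0$, i.e. $A_{p_0}<H_0$ by \eqref{Dquot}, and $D_{p_0}$ is strictly increasing. Letting $r\to1^-$ and using Lemma \ref{endpoint} gives $w^{1/p_E}/w^{1/p_0}>1$, hence $p_0<p_E<1$. Then $k_E=2p_E-1\in(0,1)$, and Lemma \ref{hyp} gives $F_{k_E}'<0$ and $Q_{k_E}'>0$. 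From \eqref{S}, $S_{p_E}$ is strictly decreasing. Lemma \ref{logturning}(2) then yields $D_{p_E}<0$, i.e. $H_0<A_{p_E}$, which proves \eqref{upsharp}.

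In the exceptional case $c=2b=1$ we have $k_0=0$ and $d=0$, so $S_{p_0}\equiv0$. Then $J\equiv0$, so $D_{p_0}\equiv0$, which is \eqref{identity}. Letting $r\to1$ gives $p_E=p_0=\frac12$.

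\textbf{Lower branch} ($c\le\min(1,2b)$). Here $d\le0$ and $-1<k_0\le0$. By Lemma \ref{hyp} ($F_{k_0}$ increasing for $-1<k_0<0$, $F_0\equiv0$, $Q_{k_0}>0$), \eqref{Sdiff} gives $S_{p_0}>0$, again except when $k_0=d=0$.

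Then $J$ is strictly decreasing from $0$, so $D_{p_0}<0$ and $D_{p_0}$ is strictly decreasing. This gives $H_0<A_{p_0}$, and the endpoint limit gives $p_E<p_0\le\frac12$. Consequently $-1<k_E<0$, so $F_{k_E}$ is increasing and $Q_{k_E}$ is increasing. Since $-\frac d2\ge0$, $S_{p_E}$ is strictly increasing. Lemma \ref{logturning}(1) gives $D_{p_E}>0$, which yields \eqref{lowsharp}.

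The only delicate point is strict monotonicity of $S_{p_E}$ when $d=0$ in the lower branch. This still holds, because $F_{k_E}$ is strictly increasing for $k_E\in(-1,0)$ and $c>0$. The $d\ge0$ case of the upper branch is likewise strict, since $F_{k_E}$ is strictly decreasing. No other obstacle is expected, since every ingredient was established in Section~\ref{sec-2} in a form independent of $a$.
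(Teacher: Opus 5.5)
Your proposal is correct and follows essentially the same route as the paper. You get the sign of $S_{p_0}$ from \eqref{Sdiff} and Lemma~\ref{hyp}, integrate \eqref{JD} from $J(0^+)=0$ to obtain the $p_0$-bound and the monotonicity of $D_{p_0}$, use the $r\to1^-$ limit to order $p_0$ against $p_E$, and then apply Lemma~\ref{logturning} at $p_E$, with the identity case $c=2b=1$ handled by $S_{p_0}\equiv0$; the only difference is cosmetic, namely that you derive $0<p_E<1$ from \eqref{geomless} at the outset rather than midway.
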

\begin{proof}
At $p=p_0=\frac{c}{1+c}$, put $k_0=2p_0-1$. Since $S_{p_0}(0^+)=0$,
\begin{equation*}
S_{p_0}(u)=\frac{c}{2}[F_{k_0}(u)-k_0]-\frac{d}{2}Q_{k_0}(u).
\end{equation*}
In the upper branch, $0\leq k_0<1$ and $d\geq0$. Lemma \ref{hyp} gives $S_{p_0}(u)<0$ for $u>0$, unless $k_0=d=0$. Hence \eqref{JD} and \eqref{Dquot} give
\begin{equation*}
D_{p_0}'>0,\quad A_{p_0}(w;1,1-r)<H_0(r).
\end{equation*}
In the lower branch, $-1<k_0\leq0$ and $d\leq0$, so Lemma \ref{hyp} gives $S_{p_0}(u)>0$ for $u>0$, unless $k_0=d=0$. Therefore
\begin{equation*}
D_{p_0}'<0,\quad H_0(r)<A_{p_0}(w;1,1-r).
\end{equation*}

By \eqref{pE0} and \eqref{geomless}, $0<p_E<1$. In the nonexceptional upper branch, for every $u_*>0$,
\begin{equation*}
0<D_{p_0}(u_*)\leq\lim_{u\to\infty}D_{p_0}(u)=\log\frac{w^{\frac{1}{p_E}}}{w^{\frac{1}{p_0}}}.
\end{equation*}
Hence $p_0<p_E<1$ and $0<k_E=2p_E-1<1$. In the nonexceptional lower branch,
\begin{equation*}
\log\frac{w^{\frac{1}{p_E}}}{w^{\frac{1}{p_0}}}=\lim_{u\to\infty}D_{p_0}(u)\leq D_{p_0}(u_*)<0,
\end{equation*}
so $0<p_E<p_0\leq\frac{1}{2}$ and $-1<k_E=2p_E-1<0$. Lemma \ref{hyp} and \eqref{S} therefore give
\begin{equation*}
\begin{cases}
S_{p_E}'<0,&\text{in the upper branch},\\
S_{p_E}'>0,&\text{in the lower branch}.
\end{cases}
\end{equation*}
Lemma \ref{logturning} yields
\begin{equation*}
\begin{cases}
H_0(r)<A_{p_E}(w;1,1-r),&\text{in the upper branch};\\
A_{p_E}(w;1,1-r)<H_0(r),&\text{in the lower branch}
\end{cases}
\end{equation*}
for every $r\in(0,1)$
These inequalities, together with the $p_0$-bounds above, prove \eqref{upsharp} and \eqref{lowsharp}.

If $k_0=d=0$, then $c=2b=1$, $p_0=\frac{1}{2}$ and $S_{p_0}\equiv0$. Equation \eqref{JD} and the initial data give $D_{p_0}\equiv0$, and hence
\begin{equation*}
H_0=A_{\frac{1}{2}},\quad p_E=p_0=\frac{1}{2}.
\end{equation*}
This is the exceptional identity.
\end{proof}

\begin{proof}[\textbf{Proof of Theorem \ref{thm1}}]
By Remark \ref{rem-a1}, we only consider $a<1$ below. The necessity of the restrictions has already been proved in Corollaries \ref{localsharp} and \ref{endpointsharp}: in the upper branch they give $\lambda\leq p_0$ and $\mu\geq p_E$, while in the lower branch they give $\lambda\leq p_E$ and $\mu\geq p_0$. Propositions \ref{prop-nonzero} and \ref{prop-log} give the order relations, the sharp endpoint comparisons, and the exceptional identity. Thus it remains only to prove sufficiency.

Assume first $c\geq \max(1-2a,2b)$. If $\lambda\leq p_0$ and $\mu\geq p_E$, Lemma \ref{meanmono} and the sharp bounds give
\begin{equation*}
\begin{aligned}
A_\lambda(w;1,1-r)&\leq A_{p_0}(w;1,1-r)\leq H_a(r)\\
&\leq A_{p_E}(w;1,1-r)\leq A_\mu(w;1,1-r)
\end{aligned}
\end{equation*}
for every $r\in(0,1)$. This proves sufficiency in the upper branch, including the exceptional identity.

Assume next $c\le\min(1-2a,2b)$. If $\lambda\leq p_E$ and $\mu\geq p_0$, then
\begin{equation*}
\begin{aligned}
A_\lambda(w;1,1-r)&\leq A_{p_E}(w;1,1-r)\leq H_a(r)\\
&\leq A_{p_0}(w;1,1-r)\leq A_\mu(w;1,1-r)
\end{aligned}
\end{equation*}
for every $r\in(0,1)$. The strict inequalities and the equality case are exactly those established in Propositions \ref{prop-nonzero} and \ref{prop-log}. This proves sufficiency in the lower branch and completes the proof.
\end{proof}

\section{The symmetric range \texorpdfstring{$a>1$}{a>1}} \label{sec-4}
Corollaries \ref{localsharp} and \ref{endpointsharp} already show that $\lambda\leq p_E$ and $\mu\geq p_0$ are necessary. The symmetric condition $c=2b$ eliminates the $d$-dependent terms from the differential operator and the residual, and places both sharp orders above $1$. We therefore establish the sharp endpoint comparisons and their order relation, after which Lemma \ref{meanmono} yields sufficiency in Theorem \ref{thm2}.

\begin{proposition} \label{prop-sym}
Let $a>1$, $b>0$ and $c=2b$. Then
\begin{equation*}
1<p_E<p_0<a,
\end{equation*}
and
\begin{equation*}
A_{p_E}\left(\frac{1}{2};1,1-r\right)<\F(-a,b;2b;r)^{\frac{1}{a}}<A_{p_0}\left(\frac{1}{2};1,1-r\right),\quad 0<r<1.
\end{equation*}
\end{proposition}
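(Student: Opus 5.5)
With $c=2b$ we have $d=0$, $w=\frac12$, $v_0=0$, and $p_0=\frac{a+2b}{1+2b}$. Since $a>1$ this gives $1<p_0<a$, because $p_0-1=\frac{a-1}{1+2b}>0$ and $a-p_0=\frac{2b(a-1)}{1+2b}>0$. The residual \eqref{S} reduces to
\begin{equation*}
S_p(u)=p-a+b\bigl(F_k(u)-1\bigr),\qquad k=2p-1 .
\end{equation*}
By \eqref{S0}, $S_{p_0}(u)=b\bigl(F_{k_0}(u)-k_0\bigr)$ with $k_0=2p_0-1>1$. Lemma \ref{hyp} says $F_{k_0}$ is strictly increasing with $F_{k_0}(0^+)=k_0$, so $S_{p_0}>0$ on $(0,\infty)$. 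Since $a>0$, Corollary \ref{direct}(1) applies with $p=p_0>0$. It gives $H_a<A_{p_0}$ on $(0,1)$, and it shows that $H_a/A_{p_0}$ is strictly decreasing.

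Next I handle $p_E$. The condition $c+a-b>0$ is automatic, and $C_a=\frac{\Gamma(b+a)\Gamma(2b)}{\Gamma(b)\Gamma(2b+a)}$. Because $H_a/A_{p_0}$ is strictly decreasing and below $1$, its limit as $r\to1^-$ is strictly less than $1$. By Lemma \ref{endpoint} that limit is $w^{1/p_E}/w^{1/p_0}$, so $w^{1/p_E}<w^{1/p_0}$, which gives $p_E<p_0$. This needs $p_E>0$. For $a>1$, Jensen's inequality for the convex map $x\mapsto x^a$ gives $C_a>w^a$, so $C_a^{1/a}\in(w,1)$. Hence $p_E=a\log w/\log C_a>0$, and in fact $p_E>1$, since $\log C_a>a\log w$ with both sides negative. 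This is the reverse of \eqref{momentless}, and it proves $1<p_E<p_0<a$.

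For the $p_E$ lower bound I use $k_E=2p_E-1>1$. Lemma \ref{hyp} makes $F_{k_E}$ strictly increasing, so $S_{p_E}$ is strictly increasing, and so is $aS_{p_E}$ because $a>0$. Corollary \ref{direct}(2) then gives $f/g>1$ on $(0,\infty)$. By \eqref{ratio}, and since $1/a>0$, this means $A_{p_E}(\tfrac12;1,1-r)<H_a(r)$ for all $r\in(0,1)$.

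The main obstacle is making sure the hypotheses of Corollary \ref{direct} and Proposition \ref{comp} really hold for $a>1$. In particular, differentiation under the integral in \eqref{f} and the ODE derivation were stated for general $a\neq0$, so I should check they are valid here, which they are. The delicate step is the strict inequality $p_E>1$: it needs strict Jensen for the convex power in the $a>1$ regime, which holds because $1-t$ is not $\nu$-a.e. constant.
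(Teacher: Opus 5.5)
Your proposal is correct and follows the paper's proof essentially step by step. Both arguments reduce to $d=0$, $w=\tfrac12$, $v_0=0$, show $S_{p_0}>0$ via Lemma~\ref{hyp} (since $k_0>1$) and apply Corollary~\ref{direct}(1), get $p_E<p_0$ from the strictly decreasing quotient and its $r\to1^-$ limit, get $p_E>1$ from strict Jensen for $x\mapsto x^a$, and finish with Corollary~\ref{direct}(2) at $k_E>1$. The only cosmetic difference is that you obtain $p_E<a$ from $p_E<p_0<a$, whereas the paper derives it separately from the bound $C_a<\tfrac12$.
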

\begin{proof}
Since $c=2b$, recalling \eqref{tanh}:
\begin{equation*}
w=\frac{1}{2},\quad d=0,\quad v_0=0.
\end{equation*}
Since $a>1$, strict Jensen inequality gives
\begin{equation*}
C_a=\int_0^1(1-t)^a\mathrm{d}\nu(t) > \left(\int_0^1(1-t)\mathrm{d}\nu(t)\right)^a=2^{-a}.
\end{equation*}
On the other hand,
\begin{equation*}
C_a<\int_0^1(1-t)\mathrm{d}\nu(t)=\frac{1}{2}.
\end{equation*}
Thus $2^{-a}<C_a<2^{-1}$. Since $C_a=2^{-\frac{a}{p_E}}$ and $x\mapsto2^{-x}$ is strictly decreasing,
\begin{equation} \label{roughsym}
1<p_E<a.
\end{equation}
Furthermore, by the definition of $p_0$ in \eqref{orders-1}:
\begin{equation} \label{p0sym}
1<p_0<a.
\end{equation}

At $p=p_0$, put $k_0=2p_0-1$. Then
\begin{equation*}
k_0>1,\quad S_{p_0}(0^+)=0,\quad S_{p_0}(u)=p_0-a+b\left(\frac{\sinh(k_0u)}{\sinh u}-1\right).
\end{equation*}
Lemma \ref{hyp} gives $S_{p_0}(u)>0$ for $u>0$. Hence Corollary \ref{direct} gives
\begin{equation} \label{symupper}
\F(-a,b;2b;r)^{\frac{1}{a}}<A_{p_0}\left(\frac{1}{2};1,1-r\right),\quad 0<r<1,
\end{equation}
and the quotient of the left-hand side by the right-hand side is strictly decreasing. Therefore, for any $r_*\in(0,1)$,
\begin{equation*}
\begin{aligned}
\frac{2^{-\frac{1}{p_E}}}{2^{-\frac{1}{p_0}}}
&=\lim_{r\to1^-}\frac{\F(-a,b;2b;r)^{\frac{1}{a}}}{A_{p_0}(\frac{1}{2};1,1-r)}\\
&\leq\frac{\F(-a,b;2b;r_*)^{\frac{1}{a}}}{A_{p_0}(\frac{1}{2};1,1-r_*)}<1.
\end{aligned}
\end{equation*}
Hence $p_E<p_0$. Together with \eqref{roughsym} and \eqref{p0sym}, this proves $1<p_E<p_0<a$.

At $p=p_E$, one has $k_E=2p_E-1>1$ and $S_{p_E}'>0$. Since $a>0$, Corollary \ref{direct} and \eqref{ratio} give
\begin{equation} \label{symlower}
A_{p_E}\left(\frac{1}{2};1,1-r\right)<\F(-a,b;2b;r)^{\frac{1}{a}},\quad 0<r<1.
\end{equation}
Combining \eqref{symlower} and \eqref{symupper} proves the stated sharp two-sided estimate.
\end{proof}

\begin{proof}[\textbf{Proof of Theorem \ref{thm2}}]
The necessity of $\lambda\leq p_E$ and $\mu\geq p_0$ has already been proved in Corollaries \ref{endpointsharp} and \ref{localsharp}, respectively. Proposition \ref{prop-sym} gives \eqref{symrange} and \eqref{symsharp}. For sufficiency, assume $\lambda\leq p_E$ and $\mu\geq p_0$. Lemma \ref{meanmono} then yields
\begin{equation*}
\begin{aligned}
A_\lambda\left(\frac{1}{2};1,1-r\right)&\leq A_{p_E}\left(\frac{1}{2};1,1-r\right)<\F(-a,b;2b;r)^{\frac{1}{a}}\\
&<A_{p_0}\left(\frac{1}{2};1,1-r\right)\leq A_\mu\left(\frac{1}{2};1,1-r\right).
\end{aligned}
\end{equation*}
Thus the two restrictions are sufficient, and the proof is complete.
\end{proof}

\appendix
\section{Auxiliary proofs}
This appendix collects the standard power-mean monotonicity result used throughout the paper and the proofs of the two auxiliary criteria whose statements are retained in Section \ref{sec-2}.

\begin{lemma}\label{meanmono}
Fix $w\in(0,1)$ and $h>0$ with $h\neq1$. Then $p\mapsto A_p(w;1,h)$ is strictly increasing on $\mathbb{R}$.
\end{lemma}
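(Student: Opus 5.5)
We must show that for fixed $w\in(0,1)$ and $h>0$, $h\neq1$, the map $p\mapsto A_p(w;1,h)$ is strictly increasing on $\mathbb{R}$. The standard route is to take logarithms and differentiate in $p$ for $p\neq0$, then handle $p=0$ by continuity.

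For $p\neq0$ set
\begin{equation*}
\phi(p)=\log A_p(w;1,h)=\frac{1}{p}\log M(p),\qquad M(p)=w+(1-w)h^p .
\end{equation*}
Differentiation gives
\begin{equation*}
p^2\phi'(p)=\frac{p(1-w)h^p\log h}{M(p)}-\log M(p)=:\psi(p).
\end{equation*}
So it suffices to show $\psi(p)>0$ for $p\neq0$.

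Writing $x=h^p$, so that $p\log h=\log x$, one has
\begin{equation*}
\psi=\frac{(1-w)x\log x}{w+(1-w)x}-\log\bigl(w+(1-w)x\bigr).
\end{equation*}
Consider the convex function $\Phi(x)=x\log x$ and the two-point probability with weights $w,1-w$ at the points $1$ and $x$, whose mean is $M=w+(1-w)x$. Then
\begin{equation*}
M\psi=w\Phi(1)+(1-w)\Phi(x)-\Phi(M).
\end{equation*}
Since $\Phi(x)=x\log x$ is strictly convex on $(0,\infty)$ and $x\neq1$ (because $h\neq1$ and $p\neq0$), strict Jensen gives $M\psi>0$. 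Hence $\phi'>0$ on $(-\infty,0)$ and on $(0,\infty)$.

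At $p=0$, expand $\log M(p)=(1-w)p\log h+O(p^2)$ to get $\phi(p)\to(1-w)\log h=\log A_0$. Thus $\phi$ is continuous on $\mathbb{R}$ and strictly increasing on each half-line. Continuity at $0$ then yields strict monotonicity across $0$: for $p<0<q$ one has $\phi(p)<\phi(0)<\phi(q)$, since a strictly increasing function on $(0,\infty)$ satisfies $\phi(q)>\lim_{s\to0^+}\phi(s)=\phi(0)$, and symmetrically on the negative side. Exponentiating completes the proof.

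The only step needing care is recognizing $\psi$ as a Jensen gap for $x\log x$, which gives the sign for all $p\neq0$ at once. The remaining steps are routine.
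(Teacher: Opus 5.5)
Your proof is correct, and it takes a different route from the paper's.

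The paper avoids calculus altogether and compares two orders directly, case by case:
\begin{itemize}
\item For $0<p_1<p_2$ it applies strict Jensen to the convex function $x\mapsto x^{p_2/p_1}$ at the points $1$ and $h^{p_1}$ with weights $w,1-w$.
\item It compares $A_0$ with $A_p$ through the strict concavity of $\log$, namely $\log[w+(1-w)h^{p}]>(1-w)p\log h$. Dividing by $p>0$ gives $A_0<A_p$, and dividing by $p<0$ gives $A_p<A_0$.
\item For $p_1<p_2<0$ it reduces to the positive case via the reflection identity $A_p(w;1,h)=A_{-p}(w;1,h^{-1})^{-1}$.
\end{itemize}

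You instead show $\frac{d}{dp}\log A_p(w;1,h)>0$ for every $p\neq0$ at once. The key step is recognizing $M(p)\,p^2\phi'(p)$ as the Jensen gap of $x\log x$ at the points $1$ and $h^p$. This removes the need for a sign split and for the reflection trick. The cost is a differentiation plus a separate continuity argument at $p=0$ to glue the two half-lines; the paper never needs that, since it compares with $A_0$ directly.

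A side benefit of your route is that it proves $\lim_{p\to0}A_p(w;1,h)=h^{1-w}$, which justifies the convention for $A_0$. The paper's route is purely algebraic: it yields the strict inequality between any two given orders without limits or derivatives, and reaches the mixed case $p_1<0<p_2$ by transitivity through $A_0$.
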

\begin{proof}
Let $0<p_1<p_2$. Since $x\mapsto x^{\frac{p_2}{p_1}}$ is strictly convex,
\begin{equation*}
w+(1-w)h^{p_2}>[w+(1-w)h^{p_1}]^{\frac{p_2}{p_1}},
\end{equation*}
and hence $A_{p_1}(w;1,h)<A_{p_2}(w;1,h)$.

For $p_2>0$, strict concavity of $\log$ gives
\begin{equation*}
\log[w+(1-w)h^{p_2}]>(1-w)p_2\log h,
\end{equation*}
so $A_0(w;1,h)<A_{p_2}(w;1,h)$. If $p_1<0$, division of
\begin{equation*}
\log[w+(1-w)h^{p_1}]>(1-w)p_1\log h
\end{equation*}
by $p_1$ yields $A_{p_1}(w;1,h)<A_0(w;1,h)$.

If $p_1<p_2<0$, then $-p_1>-p_2>0$ and
\begin{equation*}
A_{p_1}(w;1,h)=A_{-p_1}(w;1,h^{-1})^{-1},\quad
A_{p_2}(w;1,h)=A_{-p_2}(w;1,h^{-1})^{-1}.
\end{equation*}
The positive-order case gives $A_{p_1}(w;1,h)<A_{p_2}(w;1,h)$.
\end{proof}

\begin{proof}[Proof of Lemma \ref{hyp}]
The function $x\mapsto x\coth x$ is strictly increasing on $(0,\infty)$, since
\begin{equation*}
\frac{\mathrm{d}}{\mathrm{d}x}(x\coth x)
=\frac{\sinh x\cosh x-x}{\sinh^2x}>0,
\end{equation*}
and the numerator vanishes at $x=0$ and has derivative $2\sinh^2x>0$. For $k>0$,
\begin{equation*}
\frac{F_k'(u)}{F_k(u)}
=k\coth(ku)-\coth u
=\frac{(ku)\coth(ku)-u\coth u}{u},
\end{equation*}
which gives the asserted monotonicity for $0<k<1$ and $k>1$. If $-1<k<0$, then $F_k=-F_{-k}$; the cases $k=0,1$ are immediate.

For $|k|<1$,
\begin{equation*}
Q_k'(u)=\frac{\cosh(ku)\cosh u-k\sinh(ku)\sinh u-1}{\sinh^2u}.
\end{equation*}
The numerator vanishes at $u=0$ and has derivative
\begin{equation*}
(1-k^2)\cosh(ku)\sinh u>0.
\end{equation*}
Hence $Q_k'(u)>0$ for $u>0$.
\end{proof}

\begin{proof}[Proof of Proposition \ref{comp}]
Since $m'=Pm$, a direct computation gives
\begin{equation*}
W'=-m\tilde f\tilde g\,\sigma,
\qquad
R'=\frac{W}{m\tilde g^2}.
\end{equation*}
If $\sigma<0$, then $W'>0$. Since $W(0^+)=0$, one has $W>0$, hence $R'>0$ and $R>1$. 
The case $\sigma>0$ is identical with all inequalities reversed.

Suppose now that $R(\infty)=1$ and that $\sigma$ changes sign from negative to positive at $u_0$. Then $W'>0$ on $(0,u_0)$ and $W'<0$ on $(u_0,\infty)$. Thus $W>0$ on $(0,u_0]$, and $W$ is strictly decreasing after $u_0$. It must cross a unique zero. Otherwise $W\geq0$ on $(0,\infty)$, so $R\geq R(0^+)=1$ and $R$ is strictly increasing on $(0,u_0]$, contradicting $R(\infty)=1$. Therefore $R$ first increases and then decreases, and the equal endpoint values imply $R(u)>1$ for every $u>0$. If the signs of $\sigma$ are reversed, the same argument with all inequalities reversed gives $R(u)<1$.
\end{proof}

\end{document}